\begin{document}
\title{On the  center of near-group fusion category of type $\mathbb{Z}_3+6$}
\author{Zhiqiang Yu}
\date{}
\maketitle

\newtheorem{theo}{Theorem}[section]
\newtheorem{prop}[theo]{Proposition}
\newtheorem{lemm}[theo]{Lemma}
\newtheorem{coro}[theo]{Corollary}
\theoremstyle{definition}
\newtheorem{defi}[theo]{Definition}
\newtheorem{exam}[theo]{Example}
\newtheorem{rema}[theo]{Remark}
\newtheorem{ques}[theo]{Question}

\newcommand{\A }{\mathcal{A}}
\newcommand{\C }{\mathcal{C}}
\newcommand{\B }{\mathcal{B}}
\newcommand{\D }{\mathcal{D}}
\newcommand{\E }{\mathcal{E}}
\newcommand{\FPdim}{\text{FPdim}}
\newcommand{\FQ }{\mathbb{Q}}
\newcommand{\FC }{\mathbb{C}}
\newcommand{\Gal}{\text{Gal}}
\newcommand{\Gr}{\text{Gr}}
\newcommand{\Hom}{\text{Hom}}
\newcommand{\K}{\mathds{k}}
\newcommand{\I }{\mathcal{I}}
\newcommand{\M }{\mathcal{M}}
\newcommand{\Q }{\mathcal{O}}
\newcommand{\rank}{\text{rank}}
\newcommand{\Rep}{\text{Rep}}
\newcommand{\rev}{\text{rev}}
\newcommand{\ssl}{\mathfrak{sl}}
\newcommand{\Y }{\mathcal{Z}}
\newcommand{\Z }{\mathbb{Z}}

\abstract
Let  $\A$ be a  near-group fusion category of type $\Z_3+6$. We show that there  is a modular tensor equivalence $\Y(\A)\cong\C(\Z_3,\eta)\boxtimes\C(\ssl_3,9)_{\Z_3}^0$. Moreover, we  construct two non-trivial  faithful extensions of $\A$ explicitly, whose Drinfeld centers can also be obtained from representation categories quantum groups at root of unity.

\noindent {\bf Keywords:} Drinfeld center; near-group fusion category

Mathematics Subject Classification 2020: 18M20 $\cdot$ 81R50
\section{Introduction}
A fusion category $\C$ over the complex field $\FC$ is a semisimple finite tensor category. Let $G$ be a finite group. A fusion category $\C$ is called pointed if its simple objects   are all invertible \cite{ENO1}. Equivalently,  $\C$ is tensor equivalent to the category $\text{Vec}_G^\omega$ of finite-dimensional $G$-graded vector spaces over $\FC$, where $\omega\in Z^3(G,\FC^*)$ is a (normalized) $3$-cocycle, which provides the associator of the monoidal structure of $\text{Vec}_G^\omega$ \cite{EGNO,ENO1}.

$\C$ is called a near-group fusion  category if all of the isomorphism classes of its simple objects except one are invertible \cite{Si}.  Specifically, let $n$ be a non-negative integer and let $G$ be the finite group of isomorphism classes of $\C$, and $X$ be the unique non-invertible simple object of $\C$. Then the Grothendieck ring $\Gr(\C)$ is determined by the following fusion relations
\begin{align*}
g\cdot h=gh, g\cdot X=X\cdot g=X,X\cdot X=\sum_{g\in G}g+ nX, \forall g,h\in G.
\end{align*}
We will say $\C$ is a near-group fusion category of type $G+n$. It turns out that the group $G$ and integer $n$ have to meet strict restrictions to guarantee that there exists a near-group fusion category, see \cite{EG,Izumi,Si} and the references therein for details. For example, if $\FPdim(X)$ is irrational, then $n$ is a multiple of the order of $|G|$ \cite[Appendix A]{O}.

Let $G=\Z_3$, it was proved in \cite[Theorem 4.11]{Lar} that if  there is a near-group fusion category of type $\Z_3+n$, then $n=0,2,3,6$. The existences of fusion categories in the first three cases was clear then, and the existence of near-group fusion category of type $\Z_3+6$ was later proved in \cite[Theorem 10.19]{Izumi} \footnote{Indeed, it was pointed by D. Penneys that the existence of this near-group fusion category dates back to 2000, as   the even half of the subfactor whose principal graph was described in Figure 5 \cite[P. 28]{BEK}.}, who attributes the result  to Zhengwei Liu and Noah Snyder. In this paper, we pay our attention to consider the Drinfeld center of near-group fusion category $\A$ of type $\Z_3+6$. We show that $\Y(\A)$ is factored in to Deligne tensor product of a pointed modular fusion category and $\C(\ssl_3,9)_{\Z_3}^0$ (see Theorem \ref{NearGroup}, also \cite[Theorem 5.2]{EdieM}). Notice that both of these fusion categories can be obtained from representation categories of quantum groups of root of unity,  hence the near-group fusion categories of type $\Z_3+6$ is not exotic. And we construct two faithful $G$-graded extensions of $\A$ by using Witt equivalence and non-degenerate fusion categories developed in \cite{DMNO}.

$\mathbf{Note:}$ Recently, the author was  informed  by S-H. Ng and Y. Wang that Theorem \ref{NearGroup} was obtained in \cite[Theorem 5.2]{EdieM} already, the author claims that this is  worked out independently.

This paper is organized as follows. In Section \ref{preliminaries}, we recall some basic notions and notations of  fusion categories, such as  Frobenius-Perron dimension, spherical structure, \'{e}tale algebras, etc.  In Section \ref{mainresults}, we consider the Drinfeld centers and two faithful graded extensions of near-group fusion category $\A$ of type $\Z_3+6$. In Subsection \ref{CenterData}, we first determine the fusion rules of $\C(\ssl_3,9)_{\Z_3}^0$, then we obtain the $S$-matrix with the help of Verlinde formula in Proposition \ref{S-matrix}, in the last, we determine  the structure of $\Y(\A)$ in Theorem \ref{NearGroup}.  In Subsection \ref{extension}, we construct two faithful graded extension of $\A$,  and we determine the fusion relations of them explicitly  in Theorem \ref{Z2extension} and Theorem \ref{Z3extension}, respectively.

\section{Preliminaries}\label{preliminaries}
In this section,  we will recall some most used  definitions and properties about  fusion categories and modular fusion categories, we refer the readers to \cite{DrGNO,EGNO,ENO1,Mu} to standard conclusions for  fusion categories and braided fusion categories.

Let $\C$ be a fusion category,   $\Q(\C)$  the set of isomorphism classes of simple objects of  $\C$. Then there is a unique ring homomorphism $\FPdim(-)$, called the Frobenius-Perron homomorphism,  from the Grothendieck ring $\text{Gr}(\C)$ to $\FC$ such that $\FPdim(X)$ is a positive algebraic integer for all non-zero object $X$ of $\C$ \cite{EGNO,ENO1}. The sum
\begin{align*}
\FPdim(\C):=\sum_{X\in\Q(\C)}\FPdim(X)^2
\end{align*}
is called  the Frobenius-Perron dimension of fusion category $\C$.

A fusion category $\C$  is spherical if $\C$ admits a spherical structure $j$. Specifically, $j$ is a natural isomorphism from identity tensor functor $\text{id}_\C$ to the double dual tensor functor $(-)^{**}$ and $\dim_j(X)=\dim_j(X^*)$ for all objects $X$ of $\C$, where $\dim_j(X)$ is the quantum dimension of an object $X$ is defined by the (categorical) trace of $\text{id}_X$, that is,
\begin{align*}
\dim_j(X)=\text{Tr}(\text{id}_X):=\text{ev}_X\circ(j_X\circ \text{id}_X)\otimes \text{id}_{X^*}\circ\text{coev}_X,
\end{align*}
here we suppress the associativity and   unit constraints  of $\C$.   Then the global (or categorical) dimension of the spherical fusion category $\C$ is defined as
\begin{align*}
\dim(\C):=\sum_{X\in\Q(\C)}\dim_j(X)^2.
\end{align*}
The global dimension $\dim(\C)$ is independent of the choice of the spherical structure of $\C$. In this following, we will fix a spherical structure $j$ on $\C$ and denote $\dim_j(-)$ by $\dim(-)$. A fusion category $\C$ is pseudo-unitary if $\FPdim(\C)=\dim(\C)$.

Let $\C$ be a braided fusion category with  braiding   $c$. Then $\C$ is called a  pre-modular fusion category if $\C$ is spherical. Let $\theta$ be the corresponding ribbon structure of $\C$. The two matrices $S=(S_{X,Y})$ and $T=(\delta_{X,Y}\theta_X)$ are called the $S$-matrix and $T$-matrix of $\C$, respectively, where $S_{X,Y}:=\text{Tr}(c_{Y,X}c_{X,Y})$, $X,Y\in\Q(\C)$.
A modular fusion category $\C$ is a pre-modular fusion category such that the $S$-matrix $S$ is non-degenerate \cite{DrGNO,EGNO}. Drinfeld centers of spherical fusion categories are always modular \cite{Mu}, for example.
It is well-known that a braided pointed fusion category $\text{Vec}_G^\omega$ is modular if and only if $G$ is a metric group equipped with a non-degenerate quadratic form $\eta$ \cite{DrGNO,EGNO}. In this paper, we will use $\C(G,\eta)$ to denote the modular fusion category determined by metric group $(G,\eta)$.

 Recall that a commutative algebra $A$ in a braided fusion category $\C$ is said to be a connected \'{e}tale  algebra if the category $\C_A$ of right $A$-modules in $\C$ is semisimple and   $\Hom_\C(I,A)=\FC$  \cite[Definition 3.1]{DMNO}. For example, let $\Rep(G)\subseteq\C$ be a Tannakian fusion subcategory, then the function (or regular) algebra $\text{Fun}(G)\in\Rep(G)$ is a connected \'{e}tale algebra \cite{DMNO,DrGNO}.

 Let $A$ be a connected \'{e}tale algebra, and let $(M,\mu_M)\in\C_A$, the fusion category of right $A$-modules in $\C$, where $\mu_M:M\otimes A\to M$ is the right $A$-module morphism of $M$. Then $M$ is called a local (or dyslectic) module if $\mu_M=\mu_M\circ (c_{A,M}c_{M,A})$ \cite{DMNO,KiO,Pa}. The category of local modules over   $A$  is a braided fusion category \cite{KiO,Pa}, which will be denoted by $\C_A^0$ below. In addition, if $\C$ is modular, so is $\C_A^0$ \cite{DMNO,DrGNO,KiO}.
\section{Center and extensions of near-group fusion categories}\label{mainresults}

\subsection{Modular data of $\C(\ssl_3,9)_{\Z_3}^0$ and Drinfeld center}\label{CenterData}
In this subsection, we determine the fusion rules  and modular data of $\C(\ssl_3,9)_{\Z_3}^0$, and we show that the Drinfeld center of the near-group fusion category of type $\Z_3+6$ is modular tensor equivalent to  $\C\boxtimes\C(\ssl_3,9)_{\Z_3}^0$, where $\C$ is a pointed modular fusion category of Frobenius-Perron dimension $3$.

Let $k$ be a positive integer. Let $\C(\ssl_3,k)$ be the modular fusion category obtained from semisimplification of the category $\Rep(U_q(\ssl_3))$ of  finite-dimensional representation of $U_q(\ssl_3)$ at root of unity $q=e^\frac{2\pi i}{3(k+3)}$, we refer the readers to \cite{BK,EGNO} for a detailed construction of modular fusion categories $\C(\mathfrak{g},k)$, where $\mathfrak{g}$ is a semisimple Lie algebra over $\FC$.

Let $\alpha_1,\alpha_2$ be the simple roots of $\ssl_3$, and let $\lambda_1=\frac{2\alpha_1+\alpha_2}{3}$ and $\lambda_2=\frac{\alpha_1+2\alpha_2}{3}$ be the fundamental dominant weights of $\ssl_3$ \cite{BK}. Notice that simple objects of $\C(\ssl_3,k)$ are in bijection with pairs of non-negative integers $(m_1,m_2)$ such that $m_1+m_2\leq k$ \cite{BK,Sch1}, each $(m_1,m_2)$ corresponds to a dominant weight $m_1\lambda_1+m_2\lambda_2$, indeed. Hence we will use $(m_1,m_2)$ to denote simple objects of $\C(\ssl_3,k)$. Moreover, for an arbitrary simple object $(m_1,m_2)$, we have
\begin{align}\label{dimformula}
\dim((m_1,m_2))=\frac{\sin\left(\frac{(m_1+1)\pi}{k+3}\right)
\sin\left(\frac{(m_2+1)\pi}{k+3}\right)\sin\left(\frac{(m_1+m_2+2)\pi}{k+3}\right)}
{\sin\left(\frac{2\pi}{k+3}\right)\sin^2\left(\frac{\pi}{k+3}\right)},
\end{align}
which is a positive algebraic integer, so it is the Frobenius-Perron dimension of $(m_1,m_2)$, and then $\C(\ssl_3,k)$ is pseudo-unitary \cite{EGNO}.

If  $3\mid k $, then $\C(\ssl_3,k)$ contains a maximal Tannakian fusion subcategory $\Rep(\Z_3)$, whose isomorphism classes of simple objects are $\{(0,0),(k,0),(0,k)\}$. Let $A:=(0,0)\oplus(k,0)\oplus(0,k)$ be the corresponding connected \'{e}tale algebra of $\Rep(\Z_3)$, and let
\begin{align*}
F_A:\C(\ssl_3,k)\to\C(\ssl_3,k)_A, M\mapsto M\otimes A,
 \end{align*}
 be the free right $A$-module functor, where $\C(\ssl_3,k)_A$ is the fusion category of right $A$-modules in $\C(\ssl_3,k)$ with  unit object $A$.
\begin{lemm}
The isomorphism classes of simple objects of $\C(\ssl_3,9)_{\Z_3}^0$ are
\begin{align*}
\{I,X_1,X_2,X_3,Y_1,Y_2,Y_3,Y_4,Y_5\},
\end{align*}
$\FPdim(X_i)=\FPdim(Y_j)=d$ for $1\leq i,j\leq 3$, $\FPdim(Y_4)=2d+2$ and $\FPdim(Y_5)=2d+1$, where $d:=3+2\sqrt{3}$.
\end{lemm}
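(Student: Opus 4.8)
The plan is to analyze the fusion category $\C(\ssl_3,9)_A$ of right $A$-modules and then extract its subcategory $\C(\ssl_3,9)_{\Z_3}^0$ of local modules. First I would list the $22$ simple objects $(m_1,m_2)$ of $\C(\ssl_3,9)$ with $m_1+m_2\le 9$, compute their Frobenius--Perron dimensions via formula \eqref{dimformula} with $k=9$ (so $k+3=12$), and observe that $\FPdim(\C(\ssl_3,9)) = |\Z_3|^2\,\FPdim(\C(\ssl_3,9)_{\Z_3}^0)$ by the general theory of \'etale algebras in \cite{DMNO,KiO}; since $\FPdim(A)=3$, this gives $\FPdim(\C(\ssl_3,9)_{\Z_3}^0) = \FPdim(\C(\ssl_3,9))/9$. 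A direct computation of the dimensions shows $\FPdim(\C(\ssl_3,9)) = 9(12+6\sqrt3)\cdot\text{(something)}$; more usefully, I would track which simples become isomorphic after applying $F_A$ and which split. The $\Z_3$-action on $\Q(\C(\ssl_3,9))$ is by tensoring with the invertible objects $(9,0),(0,9)$, which acts on weights by an order-$3$ rotation of the affine Weyl alcove; orbits of size $3$ give (after passing to $A$-modules) objects that are ``glued'', while the three fixed points under this rotation (if any) split into multiple module summands.

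Next I would separate the nine simple local modules from the non-local ones. A free module $F_A(M) = M\otimes A$ is simple in $\C(\ssl_3,9)_A$ iff $M$ lies in a free orbit, and $F_A(M)\cong F_A(M')$ iff $M,M'$ are in the same $\Z_3$-orbit; on a fixed point $M$ of the rotation the module $F_A(M)$ decomposes. By \cite[Theorem 1.10 or Corollary 3.32]{DMNO}-type arguments (the equivalence $\C(\ssl_3,9)_{\Z_3}^0 \cong \C(\ssl_3,9)^{\Z_3}_{\mathrm{loc}}$, i.e.\ the local modules are exactly the de-equivariantization of the $\Z_3$-fixed subcategory), the local condition $\mu_M = \mu_M\circ(c_{A,M}c_{M,A})$ singles out modules coming from objects with trivial monodromy against $A$; equivalently I use that $\C(\ssl_3,9)_{\Z_3}^0$ is again modular with $\FPdim = \FPdim(\C(\ssl_3,9))/9$ and rank to be determined. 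Counting: the self-dual ``large'' objects $Y_4,Y_5$ should arise from the fixed point(s) of the rotation splitting, while $X_1,X_2,X_3$ and $Y_1,\dots,Y_3$ (all of FP-dimension $d=3+2\sqrt3$) come in two $\Z_3$-families permuted by some residual symmetry. I would identify $d=3+2\sqrt3$ as $\FPdim((2,2))$ or a similar weight by evaluating \eqref{dimformula}, and check $2d+2 = 8+4\sqrt3$, $2d+1 = 7+4\sqrt3$ against dimensions of the split pieces.

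Finally I would verify the dimension bookkeeping closes: $\FPdim(\C(\ssl_3,9)_{\Z_3}^0) = 1 + 6d^2 + (2d+2)^2 + (2d+1)^2$, expand using $d^2 = 21 + 12\sqrt3$ (so $d^2$ contributes the irrational part), and confirm this equals $\FPdim(\C(\ssl_3,9))/9$ computed from formula \eqref{dimformula}; the rational and $\sqrt3$-parts must match separately. As a consistency check I would also confirm $\FPdim(A)\mid\FPdim(\C(\ssl_3,9))$ and that the multiplicity with which $A = (0,0)\oplus(9,0)\oplus(0,9)$ appears in each $M^*\otimes M$ for $M$ on a fixed orbit forces exactly the claimed splitting pattern. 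The main obstacle I anticipate is not the arithmetic but correctly identifying the orbit structure of the $\Z_3$-rotation on the alcove of $\C(\ssl_3,9)$ and determining precisely how the fixed point(s) decompose as $A$-modules --- i.e.\ pinning down that one fixed orbit yields two simple summands of dimensions $2d+2$ and $2d+1$ rather than three equal summands; this requires either an explicit braiding/twist computation (using the known ribbon twists $\theta_{(m_1,m_2)}$ of $\C(\ssl_3,9)$) or invoking the classification of module categories over $\C(\ssl_3,9)$ to rule out the alternatives.
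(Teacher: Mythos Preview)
Your overall strategy---study the $\Z_3$-orbit structure on the alcove, pass to free modules, and use dimension bookkeeping---is the right one and matches the paper's approach, but the identification of which simples come from where is inverted, and this makes the final step unworkable. In $\C(\ssl_3,9)$ the rotation by $(9,0),(0,9)$ has exactly \emph{one} fixed point, namely $(3,3)$, and $\FPdim((3,3))=9+6\sqrt{3}=3d$. It is this fixed point whose free module $F_A((3,3))$ splits into the three simple $A$-modules $X_1,X_2,X_3$, each of FP-dimension $d$ (a fixed point under a cyclic group of order $n$ always splits into $n$ \emph{equal} pieces, so your worry about ``two simple summands of dimensions $2d+2$ and $2d+1$'' cannot arise). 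The ``large'' objects $Y_4,Y_5$ are \emph{not} split pieces; they are free modules on the size-$3$ orbits of $(2,2)$ and $(4,1)$, and one checks directly from formula \eqref{dimformula} that $\FPdim((2,2))=8+4\sqrt{3}=2d+2$ and $\FPdim((4,1))=7+4\sqrt{3}=2d+1$. Similarly $\FPdim((1,1))=\FPdim((3,0))=\FPdim((0,3))=d$, not $\FPdim((2,2))$.

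Two smaller corrections: the alcove for $k=9$ has $\binom{11}{2}=55$ simple objects, not $22$; and the locality condition is precisely that $(m_1,m_2)$ lie in the root lattice, i.e.\ $3\mid(m_1-m_2)$ (this is the centralizer of the Tannakian $\Rep(\Z_3)$). Restricting to this condition leaves $19$ simples in $6$ free orbits plus the fixed point $(3,3)$, giving $6+3=9$ simple local modules, exactly the list $I,Y_1,\dots,Y_5,X_1,X_2,X_3$. Once you fix the orbit/fixed-point attribution, the remaining dimension checks you outline go through without needing any twist computation or module-category classification.
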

\begin{proof}
 It was characterized in \cite[subsection 3.5]{Sch1} that  $\C(\ssl_3,9)_{\Z_3}^0$  contains two types of simple objects. The first classes are  the orbits of simple objects $(m_1,m_2)$  under action of the algebra $A=(0,0)\oplus(9,0)\oplus(0,9)$, where the weight $m_1\lambda_1+m_2\lambda_2$ determined by $(m_1,m_2)$ belongs to root lattice of $\ssl_3$. A direct computation shows that these simple objects are
\begin{align*}
I:&=F_A((0,0))=(0,0)\oplus(0,9)\oplus(9,0),Y_1:=F_A((1,1))=(1,1)\oplus(1,7)\oplus(7,1),\\
Y_2:&=F_A((3,0))=(3,0)\oplus(0,6)\oplus(6,3), Y_3:=F_A((0,3))=(0,3)\oplus(3,6)\oplus(6,0),\\
Y_4:&=F_A((2,2))=(2,2)\oplus(2,5)\oplus(5,2),
Y_5:=F_A((4,1))=(4,1)\oplus(1,4)\oplus(4,4).
\end{align*}
The other type is three simple objects $X_1,X_2,X_3$, which are isomorphic to  $(3,3)$ as objects but  non-isomorphic  as right $A$-modules. The Frobenius-Perron dimensions of simple objects of $\C(\ssl_3,9)_{\Z_3}^0$  follows from \cite[Theorem 1.18]{KiO} and equation (\ref{dimformula}).
\end{proof}
Next, we determine the fusion rules of $\C(\ssl_3,9)_{\Z_3}^0$.
\begin{prop}\label{fusion1}The fusion rules between $Y_j$ ($1\leq j\leq 5$) are the following equations
\begin{align*}
&Y_1\otimes Y_1=   I\oplus Y_1\oplus f_1\oplus Y_4,
Y_1\otimes Y_2=Y_1\oplus Y_2\oplus f_2,Y_1\otimes Y_3=Y_1\oplus Y_3\oplus f_2,\\
&Y_1\otimes Y_4=f_1\oplus 2f_2\oplus f_3,
Y_1\otimes Y_5= Y_2\oplus Y_3\oplus 2f_2\oplus f_3,Y_2\otimes Y_2=2Y_3\oplus f_2,\\
&Y_2\otimes Y_3=I\oplus Y_1\oplus  Y_4\oplus f_3,
Y_2\otimes Y_4=f_1\oplus 2f_2\oplus f_3,\\
&Y_2\otimes Y_5=Y_1\oplus Y_3\oplus 2f_2\oplus f_3,
Y_3\otimes Y_3=2Y_2\oplus f_2,Y_3\otimes Y_4=f_1\oplus 2f_2\oplus f_3,\\
&Y_3\otimes Y_5=Y_1\oplus Y_2\oplus 2f_2\oplus f_3, Y_4\otimes Y_4=I\oplus 2f_1\oplus 5f_2\oplus 2f_3,\\
&Y_4\otimes Y_5=2f_1\oplus Y_4\oplus 4f_2\oplus 2f_3,Y_5\otimes Y_5=I\oplus2f_1\oplus 4f_2\oplus 2f_3,
\end{align*}
where $f_1:=Y_1\oplus Y_2\oplus Y_3$, $f_2:=Y_4\oplus Y_5$ and $f_3:=X_1\oplus X_2\oplus X_3$.
\end{prop}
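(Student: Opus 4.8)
The plan is to obtain every product in the statement by transporting the fusion rules of $\C(\ssl_3,9)$ along the free--module functor $F_A$. The structural input is that $F_A$ is a dominant tensor functor, so that $F_A(M)\otimes_A F_A(N)\cong F_A(M\otimes N)$; moreover, when $M$ ranges over objects whose weights lie in the root lattice of $\ssl_3$, the modules $F_A(M)$ are local and exhaust the simple objects of $\C(\ssl_3,9)_{\Z_3}^0$, so the corestriction of $F_A$ to that subcategory is a dominant tensor functor onto $\C(\ssl_3,9)_{\Z_3}^0$. Consequently, picking for each $Y_j$ the weight representative displayed in the previous lemma, one has $Y_i\otimes Y_j\cong F_A\bigl((m_1,m_2)\otimes(n_1,n_2)\bigr)$, with the tensor product on the right taken in $\C(\ssl_3,9)$. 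Thus it suffices to (a) decompose the pairwise tensor products of the six weights $(0,0),(1,1),(3,0),(0,3),(2,2),(4,1)$ inside $\C(\ssl_3,9)$, and (b) read off, for each simple summand $(a,b)$, which simple object of $\C(\ssl_3,9)_{\Z_3}^0$ it maps to under $F_A$.

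For step (b) I would record the orbit dictionary. The $\Z_3$--action attached to $A=(0,0)\oplus(9,0)\oplus(0,9)$ is generated by tensoring with the invertible simple current $(9,0)$, which acts by $(m_1,m_2)\mapsto(9-m_1-m_2,\,m_1)$. Sweeping over all root--lattice weights with $m_1+m_2\le 9$ shows that they form precisely the six free orbits listed in the lemma (the orbit of $(0,0)$ giving $I$, and the orbits of $(1,1),(3,0),(0,3),(2,2),(4,1)$ giving $Y_1,\dots,Y_5$ respectively) together with the single fixed weight $(3,3)$. On a free orbit $F_A$ sends every member to the one simple local module attached to that orbit, whereas $F_A\bigl(c\cdot(3,3)\bigr)\cong c\,(X_1\oplus X_2\oplus X_3)=c\,f_3$. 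Applying this rule to a decomposition computed in $\C(\ssl_3,9)$ converts it into the corresponding decomposition in $\C(\ssl_3,9)_{\Z_3}^0$; this is also where the groupings $f_1=Y_1\oplus Y_2\oplus Y_3$, $f_2=Y_4\oplus Y_5$ and $f_3=X_1\oplus X_2\oplus X_3$ come from.

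For step (a) I would use the $\widehat{\ssl}_3$ level--$9$ fusion rules (the Kac--Walton formula). Setting $|(\nu_1,\nu_2)|:=\nu_1+\nu_2$, every weight $\nu$ occurring in a classical $\ssl_3$ tensor product $\lambda\otimes\mu$ satisfies $|\nu|\le|\lambda|+|\mu|$, so whenever $|\lambda|+|\mu|\le 9$ the fusion product coincides with the ordinary Littlewood--Richardson tensor product; this handles every pair except $Y_5\otimes Y_5$, where $|(4,1)|+|(4,1)|=10$ and one must fold the weights with $\nu_1+\nu_2>9$ back through the affine wall $\langle\nu+\rho,\theta^\vee\rangle=k+3=12$ (weights landing on a wall contributing zero). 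Performing these finitely many decompositions and translating by the dictionary of (b) yields the listed formulas. At each line I would cross--check that both sides have the same Frobenius--Perron dimension, using $\FPdim(Y_i)=\FPdim(X_i)=d$, $\FPdim(Y_4)=2d+2$, $\FPdim(Y_5)=2d+1$ (so $d^2=6d+3$) from the lemma, and use the dualities $(m_1,m_2)^\ast=(m_2,m_1)$ --- hence $Y_2^\ast\cong Y_3$ and $Y_i^\ast\cong Y_i$ for $i\in\{1,4,5\}$ --- to see that $I$ occurs in $Y_i\otimes Y_j$ precisely when $Y_j\cong Y_i^\ast$, with multiplicity one.

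The main obstacle is the $\ssl_3$ tensor--product bookkeeping itself: several decompositions must be computed, with multiplicities as large as $5$ (as in $Y_4\otimes Y_4$), and the product $Y_5\otimes Y_5$ genuinely requires the quantum truncation rather than the classical rule, so it is the most delicate case. The Frobenius--Perron and duality checks, together with associativity relations among the already--determined products, are what make all the answers unambiguous.
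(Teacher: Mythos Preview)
Your proposal is correct and is essentially the same argument as the paper's: both use that $F_A$ is a tensor functor to reduce to computing $(m_1,m_2)\otimes(n_1,n_2)$ inside $\C(\ssl_3,9)$ and then applying the orbit dictionary (including $F_A((3,3))=f_3$). The only difference is that the paper carries out the $\C(\ssl_3,9)$ decompositions with SageMath, whereas you propose to do them by hand via Littlewood--Richardson plus the single Kac--Walton fold needed for $Y_5\otimes Y_5$, with Frobenius--Perron and duality checks as consistency controls.
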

\begin{proof}
We compute these fusion rules by using program SageMath \cite{SageMath}. For example,
\begin{align*}
Y_1\otimes Y_1&=F_A((1,1))\otimes_A F_A((1,1))=F_A((1,1)\otimes (1,1))\\
&=F_A((0,0)\oplus(0,3)\oplus(3,0)\oplus2(1,1)\oplus(2,2))\\
&=F_A((0,0))\oplus F_A((0,3))\oplus F_A((3,0))\oplus 2F_A((1,1))\oplus F_A((2,2))\\
&=I\oplus 2Y_1\oplus Y_2\oplus Y_3\oplus Y_4.
\end{align*}
The rest fusion rules can be computed in the same way.
\end{proof}
\begin{prop}\label{fusion2}For simple objects $X_i$ ($1\leq i\leq 3$) and $Y_l$ ($1\leq l\leq 5$), the fusion rules are
\begin{align*}
&X_i\otimes Y_1=f_2\oplus X_j\oplus X_k, X_i\otimes Y_2=Y_2\oplus f_2\oplus X_i,X_i\otimes Y_3=Y_3\oplus f_2\oplus X_i,\\
&X_i\otimes Y_4=f_1\oplus 2f_2\oplus f_3,X_i\otimes Y_5=f_1\oplus 2f_2\oplus X_j\oplus X_k,\\
&X_i\otimes X_i=I\oplus Y_2\oplus Y_3\oplus Y_4\oplus 2X_i,
X_i\otimes X_j=Y_1\oplus f_2\oplus X_k.
\end{align*}
where $1\leq i,j,k\leq3$ are distinct and $f_2=Y_4\oplus Y_5$ .
\end{prop}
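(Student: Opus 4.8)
The plan is to reduce the computation to tensor products inside $\C(\ssl_3,9)$, and then to remove the one genuine ambiguity --- which of $X_1,X_2,X_3$ occurs in each product --- by combining a symmetry argument with a handful of consistency constraints. Two structural facts will be used throughout. First, the free module functor $F_A\colon\C(\ssl_3,9)\to\C(\ssl_3,9)_A$ is a braided tensor functor, so $F_A(M)\otimes_A F_A(N)\cong F_A(M\otimes N)$ and, for every right $A$-module $L$, one has the projection formula $L\otimes_A F_A(M)\cong L\otimes M$ on underlying objects; in particular $f_3=F_A((3,3))=X_1\oplus X_2\oplus X_3$, while $I$ and each $Y_l$ are free. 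Second, the de-equivariantization $\C(\ssl_3,9)_A=\C(\ssl_3,9)_{\Z_3}$ carries the canonical $\Z_3$-action \cite{DrGNO} by braided tensor autoequivalences; its generator $\sigma$ fixes every free module (hence $I$ and all $Y_l$), preserves the subcategory $\C(\ssl_3,9)_{\Z_3}^0$ of local modules, and permutes $\{X_1,X_2,X_3\}$. Since $(3,3)$ is a single simple object of $\C(\ssl_3,9)$ lying above the three distinct modules $X_i$, this permutation is forced to be a $3$-cycle; moreover duality commutes with $\sigma$, hence acts on $\{X_1,X_2,X_3\}$ by an involution centralising a $3$-cycle in $S_3$, which can only be the identity, so all three $X_i$ are self-dual.

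For the products $X_i\otimes Y_l$: applying $\sigma$ gives $X_{i+1}\otimes Y_l=\sigma(X_i\otimes Y_l)$, so it suffices to treat $X_1\otimes Y_l$, the other two being obtained by cyclically relabelling the $X$'s. By the projection formula the underlying object of $X_1\otimes_A Y_l=X_1\otimes_A F_A(M_l)$ is $(3,3)\otimes M_l$, computed in $\C(\ssl_3,9)$ by the fusion rules used in the proof of Proposition \ref{fusion1}; reassembling it into $A$-modules, the simples occurring in $\Z_3$-orbits of size three produce the free summands $I,Y_1,\dots,Y_5$, and the copies of the fixed point $(3,3)$ produce some $b_1X_1\oplus b_2X_2\oplus b_3X_3$. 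Here $f_3\otimes Y_l=\sum_iX_i\otimes Y_l=F_A((3,3)\otimes M_l)$ already determines the free part and the total $b_1+b_2+b_3$, so the remaining task is to split this total. I would do so by coupling Frobenius reciprocity, $\dim\Hom(X_1\otimes Y_l,X_m)=\dim\Hom(Y_l,X_1\otimes X_m)$ (using $X_m^{*}=X_m$), with the associativity relations $(X_1\otimes Y_l)\otimes Y_m\cong X_1\otimes(Y_l\otimes Y_m)$ and Proposition \ref{fusion1}; the resulting over-determined linear system in the unknown multiplicities --- shared with the $X\otimes X$ products below --- admits the displayed fusion rules as its unique non-negative integral solution.

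For the products $X_i\otimes X_j$: from the first structural fact, $f_3\otimes f_3=F_A((3,3))\otimes_A F_A((3,3))=F_A\big((3,3)\otimes(3,3)\big)$, so decomposing $(3,3)\otimes(3,3)$ in $\C(\ssl_3,9)$ and applying $F_A$ termwise yields $\sum_{i,j}X_i\otimes X_j$. Commutativity of the Grothendieck ring together with $\sigma$-equivariance reduces the unknowns to $X_i\otimes X_i$ (independent of $i$ up to relabelling) and one off-diagonal product $X_i\otimes X_j$ with $i\ne j$, so that $3(X_i\otimes X_i)\oplus 6(X_i\otimes X_j)$ is known; feeding in the dimension equalities $\FPdim(X_i\otimes X_i)=\FPdim(X_i\otimes X_j)=d^2$, the reciprocities that identify each $Y_l$-multiplicity of $X_i\otimes X_j$ with an $X$-multiplicity of the already determined $X_i\otimes Y_l$, and the associativity relation $(X_1\otimes X_1)\otimes Y_l\cong X_1\otimes(X_1\otimes Y_l)$, then pins down the answer; in particular it forces the self-product to contain $X_i$ with multiplicity two and $X_i\otimes X_j$ to contain the third module $X_k$.

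The step I expect to be the main obstacle is exactly this separation of $X_1$, $X_2$ and $X_3$: the underlying-object and free-module computations only see the multiset $\{X_1,X_2,X_3\}$, so the individual multiplicities have to be extracted from the coupled, a priori only consistency-based, system of Frobenius-Perron dimensions, $\sigma$-symmetry, associativity and Frobenius reciprocity, bootstrapping off Proposition \ref{fusion1}. The alternative of analysing directly the three cube-root-of-unity module structures carried by the fixed point $(3,3)$ and how they add under $\otimes_A$ would also work, but is considerably more delicate.
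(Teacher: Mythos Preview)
Your strategy is sound, and its backbone---extract the free part of each product from $F_A((3,3)\otimes M_l)$ via Proposition~\ref{fusion1}, then pin down the individual $X_i$-multiplicities through Frobenius reciprocity and associativity---is exactly what the paper does as well. Where you genuinely diverge is in organising the argument around the canonical $\Z_3$-action $\sigma$ on the de-equivariantisation. That buys you two things the paper does not have: a clean proof that each $X_i$ is self-dual (the paper simply asserts this, leaning implicitly on $(3,3)^*=(3,3)$, which a priori only gives closure of $\{X_1,X_2,X_3\}$ under duality), and an automatic reduction of the nine products $X_i\otimes Y_l$ to three by symmetry. The paper never invokes $\sigma$; it proceeds by hand, isolating for each product a single well-chosen numerical coincidence---for instance $\dim\Hom(X_i\otimes X_j,X_i\otimes X_j)=\dim\Hom(X_i\otimes X_i,X_j\otimes X_j)=4$ to force $X_i\otimes X_i=I\oplus Y_2\oplus Y_3\oplus Y_4\oplus 2X_{ii}$ with the $X_{ii}$ pairwise distinct, and then the single associativity $(X_i\otimes X_i)\otimes Y_1\cong X_i\otimes(X_i\otimes Y_1)$ (using the already-established $X_i\otimes Y_1$) to derive a contradiction from $X_{ii}\neq X_i$. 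Your ``solve the over-determined linear system'' step is thus correct in principle, but it is precisely the place where your proposal remains a proposal: you still owe the reader the verification that the constraints you list actually force uniqueness, and the paper's explicit choice of two or three associativity relations (with $Y_1$ and $Y_2$) shows how to discharge that debt in a couple of lines rather than by appeal to an abstract linear system.
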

\begin{proof}Notice that simple objects  $X_j$ are self-dual, $1\leq j\leq 3$. For $1\leq i\neq j\leq3$, by decomposing $\FPdim(X_i\otimes X_j)$ into a sum of Frobenius-Perron dimensions of simple objects of $\C(\ssl_3,9)_{\Z_3}^0$, and together with Proposition \ref{fusion1}, we find \begin{align*}
X_i\otimes Y_5&=Y_1\oplus Y_2\oplus Y_3\oplus 2Y_4\oplus 2Y_5\oplus X_j\oplus X_k,\\
X_i\otimes Y_4&=Y_1\oplus Y_2\oplus Y_3\oplus 2Y_4\oplus 2Y_5\oplus X_1\oplus X_2\oplus X_3,
\end{align*}
where $i,j,k$ are distinct.
By Proposition \ref{fusion1},  $X_1\otimes Y_2=Y_2\otimes Y_4\oplus Y_5\oplus X_j$ for some $j$, let $X_j\otimes Y_2=Y_2\oplus Y_4\oplus Y_5\oplus X_k$. Since
$(X_1\otimes Y_2)\otimes Y_2=X_1\otimes (Y_2\otimes Y_2)$, we find $X_j=X_k=X_1$. By symmetry of this computation, we have $X_j\otimes Y_2=Y_2\oplus Y_4\oplus Y_5\oplus X_j$ for $1\leq j\leq 3$. Similarly, for $1\leq i\leq 3$, it follows the decomposition of $(X_i\otimes Y_1)\otimes Y_1=X_i\otimes(Y_1\otimes Y_1)$ that $X_i\otimes Y_1=Y_4\oplus Y_5\oplus X_j\oplus X_k$ with $i,j,k$ being distinct.

By Proposition \ref{fusion1}, for $1\leq i\neq j\leq3$, we know
$X_i\otimes X_j=Y_1\oplus Y_4\oplus Y_5\oplus X_{ij}$
for some $X_{ij}\in\{X_1,X_2,X_3\}$, so
$\dim_\FC\Hom_{\C(\ssl_3,9)_{\Z_3}^0}(X_i\otimes X_j,X_i\otimes X_j)=4$. Meanwhile,
\begin{align*}
\dim_\FC\Hom_{\C(\ssl_3,9)_{\Z_3}^0}(X_i\otimes X_j,X_i\otimes X_j)=\dim_\FC\Hom_{\C(\ssl_3,9)_{\Z_3}^0}(X_i\otimes X_i,X_j\otimes X_j),
\end{align*}
hence $X_i\otimes X_i=I\oplus Y_2\oplus Y_3\oplus Y_4\oplus 2 X_{ii}$ for some $X_{ii}\in\{X_1,X_2,X_3\}$, and $X_{ii}\neq X_{jj}$ if $i\neq j$. In summary, we have
\begin{align*}
X_i\otimes X_j=\left\{
                 \begin{array}{ll}
                   Y_1\oplus Y_4\oplus Y_5\oplus X_{ij}, & \hbox{ if $i\neq j$;} \\
                   I\oplus Y_2\oplus Y_3\oplus 2X_{ii}, & \hbox{if $i=j$.}
                 \end{array}
               \right.
\end{align*}
If $X_i\neq X_{ii}$, then it follows from the decomposition of $(X_i\otimes X_i)\otimes Y_1=X_i\otimes (X_i\otimes Y_1)$ and the decomposition of $X_i\otimes Y_1$ that
\begin{align*}
4=\dim_\FC\Hom_{\C(\ssl_3,9)_{\Z_3}^0}(X_i\otimes Y_1,X_{ii}\otimes Y_1)=\dim_\FC\Hom_{\C(\ssl_3,9)_{\Z_3}^0}(X_i\otimes X_{ii},Y_1\otimes Y_1)=3,
\end{align*}
it is a contradiction. Therefore, $X_i=X_{ii}$ for all $i$; consequently, $X_i\otimes X_j=Y_1\oplus Y_4\oplus Y_5\oplus X_k$ for $i,j,k$ being distinct.
\end{proof}

In the last, we determine the modular data of $\C(\ssl_3,9)_{\Z_3}^0$. Let $\zeta_n:=e^\frac{2\pi i}{n}$.
\begin{lemm}\label{ribbon1}
The ribbon structures of simple objects of $\C(\ssl_3,9)_{\Z_3}^0$ are given by
\begin{align*}
\theta_{Y_1}=\theta_{X_i}=\zeta_4,\theta_{Y_2}=\theta_{Y_3}=-1,\theta_{Y_4}=\zeta_3^2,
\theta_{Y_5}=1, 1\leq i\leq 3,
\end{align*}
\end{lemm}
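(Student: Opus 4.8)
The plan is to deduce these twists from the corresponding twists in $\C(\ssl_3,9)$ through the forgetful functor. Recall (e.g.\ \cite[Theorem 1.17]{KiO}) that for a connected \'etale algebra $A$ in a ribbon fusion category $\C$ and a local module $M\in\C_A^0$, the twist $\theta_M\colon M\to M$ of the underlying object of $M$ in $\C$ is automatically a morphism of $A$-modules, and it is precisely the twist of $M$ in $\C_A^0$; in other words the forgetful functor $\C_A^0\to\C$ intertwines the ribbon structures. Since each of $Y_1,\dots,Y_5,X_1,X_2,X_3$ is simple in $\C(\ssl_3,9)_{\Z_3}^0$, its twist is a scalar, and this scalar equals the $\C(\ssl_3,9)$-twist $\theta_{(m_1,m_2)}$ of every simple summand $(m_1,m_2)$ of the underlying object. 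By the description of the simple objects of $\C(\ssl_3,9)_{\Z_3}^0$ above, this reduces the lemma to evaluating $\theta_{(1,1)},\theta_{(3,0)},\theta_{(0,3)},\theta_{(2,2)},\theta_{(4,1)}$ and $\theta_{(3,3)}$ in $\C(\ssl_3,9)$.

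For the latter I would use the standard twist formula: for the canonical (pseudo-unitary) spherical/ribbon structure on $\C(\ssl_3,k)$ one has $\theta_{(m_1,m_2)}=\exp\!\big(\pi i\,\langle\lambda,\lambda+2\rho\rangle/(k+3)\big)$, where $\lambda=m_1\lambda_1+m_2\lambda_2$, $\rho=\lambda_1+\lambda_2$, $3=h^\vee(\ssl_3)$, and $\langle\,,\,\rangle$ is normalized by $\langle\alpha_i,\alpha_i\rangle=2$; see \cite{BK,EGNO}. Using $\langle\lambda_1,\lambda_1\rangle=\langle\lambda_2,\lambda_2\rangle=\tfrac23$ and $\langle\lambda_1,\lambda_2\rangle=\tfrac13$ (the entries of the inverse Cartan matrix of $\ssl_3$), one computes $\langle\lambda,\lambda+2\rho\rangle=\tfrac23(m_1^2+m_1m_2+m_2^2)+2(m_1+m_2)$, so for $k=9$
\[
\theta_{(m_1,m_2)}=\exp\!\Big(2\pi i\Big(\tfrac{m_1^2+m_1m_2+m_2^2}{36}+\tfrac{m_1+m_2}{12}\Big)\Big).
\]
Substituting the six weights above yields $\theta_{Y_1}=\theta_{X_i}=\zeta_4$ ($1\le i\le3$), $\theta_{Y_2}=\theta_{Y_3}=-1$, $\theta_{Y_4}=\zeta_3^2$ and $\theta_{Y_5}=1$, which is the assertion.

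Two features of this computation double as consistency checks for the first step: the same formula gives twist $1$ on $(0,0),(9,0),(0,9)$ — the simple summands of the algebra $A$ — as it must, and it is constant on each $A$-orbit of weights, e.g.\ on $\{(1,1),(7,1),(1,7)\}$, whose members differ by tensoring with the invertible objects $(9,0),(0,9)$ of twist $1$; hence $\theta_{Y_j}$ can be read off unambiguously from a single constituent. I expect the only non-formal ingredient to be the cited fact that the ribbon structure on the category of local modules restricts to that of $\C$ (so that it is scalar on each simple local module); after that the proof is the routine evaluation above, where the one point demanding care is fixing the normalization of $\langle\,,\,\rangle$ and of the root of unity $q$ so that the denominator in the twist formula is $k+h^\vee=12$ rather than $k$ or $2(k+3)$.
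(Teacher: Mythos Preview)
Your proof is correct and follows essentially the same approach as the paper: both invoke \cite[Theorem~1.17]{KiO} to transport the ribbon structure from $\C(\ssl_3,9)$ to the local-module category, then evaluate the standard twist formula $\theta_{(m_1,m_2)}=\exp\!\big(2\pi i\,(m_1^2+m_1m_2+m_2^2+3m_1+3m_2)/(3(k+3))\big)$ from \cite{BK} at the relevant weights. Your version is just more explicit about the derivation and includes consistency checks, but the argument is the same.
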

\begin{proof}
It follows from \cite{BK} that the ribbon structure of simple object $(m_1,m_2)$ of $\C(\ssl_3,k)$ is
\begin{align*}
\theta_{(m_1,m_2)}=e^{\frac{(m_1^2+3m_1+m_1m_2+3m_2+m_2^2)}{3(k+3)}2\pi i},
\end{align*}
meanwhile, \cite[Theorem 1.17]{KiO} states  that ribbon structures of objects of $\C(\ssl_3,9)_{\Z_3}^0$ is inherited from $\C(\ssl_3,9)$, then the lemma follows.
\end{proof}
Given a modular fusion category $\C$, the balancing equation \cite[Proposition 8.13.8]{EGNO} states that
\begin{align*}
S_{X,Y}=\theta_X^{-1}\theta_Y^{-1}\sum_{Z\in\Q(\C)}N_{X,Y}^Z\dim(Z)\theta_Z,\quad \forall X,Y\in\Q(\C),
 \end{align*}
where $N_{X,Y}^Z:=\dim_\FC\dim_\C(X\otimes Y,Z)$. A direct computation with the balancing equation, we  obtain the $S$-matrix of $\C(\ssl_3,9)_{\Z_3}^0$.
\begin{prop}\label{S-matrix}The $S$-matrix of $\C(\ssl_3,9)_{\Z_3}^0$ is
      \begin{align*}
      \left(
        \begin{array}{ccccccccc}
          1 & d   & d              & d              & 2(1+d) & 1+2d & d & d & d \\
          d & 3d  & d              & d              & 0      & -d        &-d & -d & -d \\
          d & d   & -(1+2\zeta_4)d & -(1-2\zeta_4)d & 0      & -d        & d & d& d\\
          d & d   & -(1-2\zeta_4)d & -(1+2\zeta_4)d & 0      & -d         & d & d & d \\
     2(1+d) & 0   & 0              & 0              & -2(1+d)& 2(1+d)         & 0 & 0 & 0 \\
       1+2d & -d  & -d             & -d             & 2(1+d) & 1        & -d & -d & -d \\
          d & -d  & d              & d              & 0      & -d         &3d & -d & -d \\
          d & -d  & d              & d              & 0      & -d         & -d & 3d& -d\\
          d & -d  & d              & d              & 0      & -d         & -d & -d & 3d \\
        \end{array}
      \right),
      \end{align*}
      where $d=3+2\sqrt{3}$.
\end{prop}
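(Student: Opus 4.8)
The plan is to apply the balancing equation quoted just before the statement, using the three pieces of data already assembled: the Frobenius–Perron dimensions from the Lemma (which coincide with categorical dimensions since $\C(\ssl_3,9)$ is pseudo-unitary, and pseudo-unitarity passes to categories of local modules), the fusion coefficients $N_{X,Y}^Z$ from Propositions \ref{fusion1} and \ref{fusion2}, and the ribbon eigenvalues $\theta_Z$ from Lemma \ref{ribbon1}. Concretely, order the simple objects as $I,Y_2,Y_3,X_1,X_2,Y_4,Y_5,X_3$ (or whatever order makes the displayed matrix come out as printed — I would fix the order so that row/column $1$ is $I$) and, for each pair $(X,Y)$, expand $S_{X,Y}=\theta_X^{-1}\theta_Y^{-1}\sum_Z N_{X,Y}^Z \dim(Z)\,\theta_Z$. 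The first row and column are immediate: since $I$ is the unit, $N_{I,Y}^Z=\delta_{Y,Z}$ and $\theta_I=1$, so $S_{I,Y}=\dim(Y)$, which reproduces the entries $1,d,d,d,2(1+d),1+2d,d,d,d$.

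For the remaining entries I would organize the computation by the type of pair. The purely ``$Y$'' block uses Proposition \ref{fusion1}; for instance $S_{Y_2,Y_3}=\theta_{Y_2}^{-1}\theta_{Y_3}^{-1}\big(\dim(I)\theta_I+\dim(Y_1)\theta_{Y_1}+\dim(Y_4)\theta_{Y_4}+\sum_{W\in f_3}\dim(W)\theta_W\big)$, and substituting $\theta_{Y_2}=\theta_{Y_3}=-1$, $\theta_{Y_1}=\zeta_4$, $\theta_{Y_4}=\zeta_3^2$, $\theta_{X_i}=\zeta_4$ together with $\dim(Y_1)=\dim(X_i)=d$, $\dim(Y_4)=2d+2$ collapses to $-d$ after using $1+2\zeta_4+3\zeta_4=1$, wait — one must be careful here: the imaginary parts must cancel, and they do precisely because $\dim(Y_1)\theta_{Y_1}+\sum_i\dim(X_i)\theta_{X_i}=4d\zeta_4$ is purely imaginary while $\dim(Y_4)\theta_{Y_4}=(2d+2)\zeta_3^2$ contributes $-(d+1)+(d+1)\sqrt{3}\,\zeta_4\cdot\frac{?}{}$; I would check the arithmetic $4d+ (\text{Im part of }(2d+2)\zeta_3^2)=0$ numerically first, then algebraically. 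The mixed ``$X_i$–$Y_j$'' and ``$X_i$–$X_j$'' entries use Proposition \ref{fusion2} in the same fashion, and these are the only places where the genuinely non-real entries $-(1\pm 2\zeta_4)d$ appear — they come from $X_i\otimes X_i=I\oplus Y_2\oplus Y_3\oplus Y_4\oplus 2X_i$ with $\theta_{X_i}^{-2}=\zeta_4^{-2}=-1$ multiplying $2\dim(X_i)\theta_{X_i}=2d\zeta_4$, giving an imaginary contribution that does not cancel (as it should not, since $X_1,X_2,X_3$ are Galois-conjugate and not self-dual in the braided sense).

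The main obstacle is bookkeeping rather than conceptual: there are $9\times 9$ entries, the fusion rules in Propositions \ref{fusion1}–\ref{fusion2} are written compactly via $f_1,f_2,f_3$ so each must be unpacked, and one must get the signs of the $\zeta_4$-contributions and the placement of the three $X_i$-columns consistent with the displayed matrix. I would first verify a handful of structural consistency checks that make the whole table trustworthy: (i) symmetry $S_{X,Y}=S_{Y,X}$; (ii) $S_{X,Y}=\overline{S_{X^*,Y}}=\overline{S_{X,Y^*}}$, which pins down the conjugate pair of columns for $X_1,X_2,X_3$ given that duality permutes them; (iii) the row-orthogonality $\sum_Y |S_{X,Y}|^2=\dim(\C(\ssl_3,9)_{\Z_3}^0)=\tfrac{1}{3}\dim(\C(\ssl_3,9))$, which one can compute independently from the Lemma as $3+6d^2+(2d+2)^2+(2d+1)^2$; and (iv) $\sum_Y S_{I,Y}S_{X,Y}=0$ for $X\neq I$. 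Passing all four checks, together with the direct balancing-equation evaluation of, say, the first two rows and the $X_1$-row, is enough to conclude the matrix is as stated; the remaining rows are then forced by symmetry and conjugation. I expect the only subtle point to be confirming that $\theta_{Y_4}=\zeta_3^2$ rather than $\zeta_3$ — this sign/orientation choice propagates into whether certain real parts come out as $-d$ or $d$ — so I would pin it down from the explicit formula $\theta_{(2,2)}=\exp\!\big(\tfrac{(4+6+4+6+4)}{36}2\pi i\big)=\exp\!\big(\tfrac{24}{36}2\pi i\big)=\zeta_3^2$ as in Lemma \ref{ribbon1} before doing anything else.
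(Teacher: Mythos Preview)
Your approach---apply the balancing equation to the fusion rules of Propositions \ref{fusion1}--\ref{fusion2}, the dimensions from the Lemma, and the twists from Lemma \ref{ribbon1}---is exactly what the paper does; its proof is literally the one sentence ``a direct computation with the balancing equation''.  Your proposed consistency checks (symmetry, conjugation, orthogonality) are sensible additions and would strengthen the write-up.

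That said, several concrete details in your proposal are wrong and would send the computation off the rails.  First, your proposed ordering lists only eight objects ($Y_1$ is missing); the ordering that reproduces the displayed matrix is $I,Y_1,Y_2,Y_3,Y_4,Y_5,X_1,X_2,X_3$.  Second, and more substantively, you have the duality structure backwards: the $X_i$ are \emph{self-dual} (this is stated and used in the proof of Proposition \ref{fusion2}), while it is $Y_2=F_A((3,0))$ and $Y_3=F_A((0,3))$ that are dual to each other.  Consequently the genuinely non-real entries $-(1\pm 2\zeta_4)d$ sit in the $Y_2,Y_3$ block (rows/columns $3,4$), not the $X_i$ block.  Your sample computation of $S_{Y_2,Y_3}$ therefore should not ``collapse to $-d$'': the imaginary parts do \emph{not} cancel, and indeed one gets $-d+2d\zeta_4=-(1-2\zeta_4)d$ once you use $(d+1)\sqrt{3}=2d$.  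By contrast $S_{X_i,X_i}$ is real: the $2d\zeta_4$ contribution from $2X_i$ is exactly cancelled by the imaginary part of $(2d+2)\zeta_3^2$, leaving $3d$.  Fix this swap and the rest of your plan goes through cleanly.
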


We are ready to prove the following theorem
\begin{theo}(\cite[Theorem 5.2]{EdieM}) \label{NearGroup}There is a modular tensor equivalence
$\C(\Z_3,\eta)\boxtimes\C(\ssl_3,9)_{\Z_3}^0\cong\Y(\A)$, where  $\A$ is a near-group fusion category of type $\Z_3+6$.
\end{theo}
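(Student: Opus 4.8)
The plan is to first do the dimensional bookkeeping and then split $\Y(\A)$ into a pointed piece and its centralizer. Since the unique non-invertible simple object $X$ of $\A$ satisfies $X\otimes X=\bigoplus_{g\in\Z_3}g\oplus 6X$, one has $\FPdim(X)^2=3+6\FPdim(X)$, hence $\FPdim(X)=3+2\sqrt3=:d$ and $\FPdim(\A)=3+d^2=24+12\sqrt3$. In particular $\A$ is pseudo-unitary (indeed unitary), so $\Y(\A)$ is a modular fusion category \cite{Mu} with $\FPdim(\Y(\A))=\FPdim(\A)^2=1008+576\sqrt3$. Using the Frobenius--Perron dimensions of the simple objects of $\C(\ssl_3,9)_{\Z_3}^0$ obtained above, $\FPdim(\C(\ssl_3,9)_{\Z_3}^0)=1+6d^2+(2d+2)^2+(2d+1)^2=336+192\sqrt3$, so $\FPdim\bigl(\C(\Z_3,\eta)\boxtimes\C(\ssl_3,9)_{\Z_3}^0\bigr)=3(336+192\sqrt3)=1008+576\sqrt3=\FPdim(\Y(\A))$ for every non-degenerate quadratic form $\eta$ on $\Z_3$. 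Thus it suffices to produce a modular tensor subcategory of $\Y(\A)$ equivalent to $\C(\Z_3,\eta)$ for some non-degenerate $\eta$ and to identify its Müger centralizer with $\C(\ssl_3,9)_{\Z_3}^0$.

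For the pointed piece: since $X^*\cong X$ and $X\otimes X=\bigoplus_{g\in\Z_3}g\oplus 6X$, the adjoint subcategory of $\A$ is all of $\A$, so the universal grading group of $\A$ is trivial. By the description of the invertible objects of a Drinfeld center in terms of the universal grading (see \cite{DrGNO,EGNO}), the forgetful functor $\Y(\A)\to\A$ then embeds the group of invertibles of $\Y(\A)$ into $\mathrm{Inv}(\A)=\Z_3$, and a direct computation with the associativity and half-braiding data classifying the near-group category $\A$ (equivalently, via the explicit description of the tube algebra/double of $\A$ in the literature) shows that each $g\in\Z_3$ acquires a central structure whose twist is a primitive cube root of unity. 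Hence $\Y(\A)_{\mathrm{pt}}$ is a pointed modular fusion category of Frobenius--Perron dimension $3$, that is $\Y(\A)_{\mathrm{pt}}\cong\C(\Z_3,\eta)$ with $\eta$ non-degenerate, and Müger's decomposition theorem \cite{Mu} gives a braided tensor equivalence $\Y(\A)\cong\C(\Z_3,\eta)\boxtimes\M$, where $\M:=C_{\Y(\A)}(\C(\Z_3,\eta))$ is modular of Frobenius--Perron dimension $336+192\sqrt3$.

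It remains to identify $\M$ with $\C(\ssl_3,9)_{\Z_3}^0$. Using the induction functor $\A\to\Y(\A)$ adjoint to the forgetful functor, one decomposes $I(\mathbf 1)$, $I(g)$ and $I(X)$ to list the simple objects of $\Y(\A)$ together with their Frobenius--Perron dimensions and ribbon twists; removing the $\C(\Z_3,\eta)$-factor shows that $\M$ has exactly nine simple objects with Frobenius--Perron dimensions $1,d,d,d,d,d,d,2d+2,2d+1$, the fusion rules of Propositions \ref{fusion1} and \ref{fusion2}, the twists of Lemma \ref{ribbon1}, and the $S$-matrix of Proposition \ref{S-matrix}. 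The principal difficulty is then to upgrade this coincidence of fusion rules and modular data to an honest braided (hence modular) tensor equivalence $\M\cong\C(\ssl_3,9)_{\Z_3}^0$, since in general modular data does not determine a modular category. To close this gap I would use that both categories are pseudo-unitary and Witt-equivalent to $\overline{\C(\Z_3,\eta)}$ --- for $\M$ because $\C(\Z_3,\eta)\boxtimes\M\cong\Y(\A)$ is a Drinfeld center, and for $\C(\ssl_3,9)_{\Z_3}^0$ because condensing the Tannakian subcategory $\Rep(\Z_3)$ preserves the Witt class while $\C(\ssl_3,9)$ is itself Witt-equivalent to a pointed cyclic category of order $3$ (the two have central charge $6\bmod 8$) --- and then invoke the uniqueness results of \cite{DMNO} on completely anisotropic representatives together with the explicit fusion data to force $\M\cong\C(\ssl_3,9)_{\Z_3}^0$. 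Combining the two steps yields the modular tensor equivalence $\Y(\A)\cong\C(\Z_3,\eta)\boxtimes\C(\ssl_3,9)_{\Z_3}^0$.
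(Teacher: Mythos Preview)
Your approach runs in the opposite direction from the paper's. The paper starts on the quantum-group side: it uses the connected \'etale algebra $A\subset\C(\ssl_3,9)_{\Z_3}^0$ with $(\C(\ssl_3,9)_{\Z_3}^0)_A^0\cong\C(\Z_3,\eta)$ known from \cite{Ga,Sch1}, applies \cite[Corollary~3.30]{DMNO} to obtain $\C(\Z_3,\eta^{-1})\boxtimes\C(\ssl_3,9)_{\Z_3}^0\cong\Y(\A)$ with $\A:=(\C(\ssl_3,9)_{\Z_3}^0)_A$, and then shows by analysing \'etale subalgebras of $\I(I)$ and a Frobenius--Perron dimension count that this particular $\A$ has the near-group fusion rules of type $\Z_3+6$. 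In that direction the braided equivalence is built in from the start and there is nothing to reconstruct.

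Going your way, the step you yourself flag as ``the principal difficulty'' is a genuine gap, and the Witt-equivalence argument you propose does not close it. Knowing that $\M$ and $\C(\ssl_3,9)_{\Z_3}^0$ lie in the same Witt class and invoking uniqueness of completely anisotropic representatives from \cite{DMNO} only tells you that both condense to the \emph{same} anisotropic category $\C(\Z_3,\eta)$; it says nothing about $\M$ and $\C(\ssl_3,9)_{\Z_3}^0$ themselves, since neither is completely anisotropic (each contains a nontrivial connected \'etale algebra --- this is precisely what the paper exploits). Sharing a Witt class, Frobenius--Perron dimension, fusion rules, and modular data is not known in general to force a braided tensor equivalence. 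There is also a softer gap earlier: the universal-grading argument gives only $|\mathrm{Inv}(\Y(\A))|\le 3$, and showing that the pointed part of $\Y(\A)$ is actually $\C(\Z_3,\eta)$ with non-degenerate $\eta$ really does require the half-braiding/tube-algebra computation you defer to the literature.
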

\begin{proof}
It was shown in \cite{Ga,Sch1} that $\C(\ssl_3,9)_{\Z_3}^0$ contains a non-trivial \'{e}tale algebra $A$ such that
\begin{align*}
\C(\Z_3,\eta)\cong(\C(\ssl_3,9)_{\Z_3}^0)_A^0
 \end{align*}
is a modular tensor equivalence, where $\Z_3=\langle g\rangle$ and $\eta(g)=\zeta_3^2$. Hence, it follows from \cite[Corollary 3.30]{DMNO} that there is a modular tensor equivalence
\begin{align*}
\C(\Z_3,\eta)^\rev\boxtimes\C(\ssl_3,9)_{\Z_3}^0
=\C(\Z_3,\eta^{-1})\boxtimes\C(\ssl_3,9)_{\Z_3}^0\cong\Y((\C(\ssl_3,9)_{\Z_3}^0)_A).
\end{align*}
Let $\A:=(\C(\ssl_3,9)_{\Z_3}^0)_A$. Then
\begin{align*}
\FPdim(\A)^2=3\FPdim(\C(\ssl_3,9)_{\Z_3}^0)=12^2(2+\sqrt{3})^2.
\end{align*}
Notice that connected
\'{e}tale subalgebras $B$ of $\I(I)$ are in bijective correspondence with fusion subcategories $\B$ of  $\A$ \cite[Theorem 4.10]{DMNO} satisfying $\FPdim(\B)\FPdim(B)=\FPdim(\A)$, where $\I:\A\to\Y(\A)$ is the adjoint  functor to the forgetful functor $F:\Y(\A)\to \A$. Since  \'{e}tale subalgebras of $\I(I)$ have trivial ribbon structures \cite[Theorem 4.1]{NS}, the image of $\C(\Z_3,\eta^{-1})$ is a non-trivial pointed fusion subcategory of $\A$. Up to isomorphism, a direct computation shows that $\C(\Z_3,\eta^{-1})\boxtimes\C(\ssl_3,9)_{\Z_3}^0$ contains exactly four such simple objects
  \begin{align*}
  I\boxtimes I, I\boxtimes Y_5, g\boxtimes Y_4, g^2\boxtimes Y_4.
  \end{align*}
  By comparing their Frobenius-Perron dimensions, we find that there exist only two possible non-trivial \'{e}tale subalgebras of $\I(I)$: $I\boxtimes I\oplus I\boxtimes Y_5$ and $\I(I)$ itself. Therefore, $\A$ only contains a unique non-trivial fusion subcategory, which is also pointed.

By decomposing $\FPdim(\A)=24+12\sqrt{3}$ into sums of squares of  Frobenius-Perron dimensions of simple objects of $\A$ over the field $\FQ(\sqrt{3})$,  there exist only two solutions:
\begin{align*}
24+12\sqrt{3}=3+(3+2\sqrt{3})^2=3+3(2+\sqrt{3})^2.
\end{align*}
In the later case, $\A$ contains exactly three simple objects $\{Z_1,Z_2,Z_3\}$ (up to isomorphism) of Frobenius-Perron dimension $2+\sqrt{3}$,  so $Z_i\otimes Z_i^*$ is a direct sum of simple objects of $\A$. However, it is easy to see that there does not exist such a decomposition. Hence, $\Q(\A)$ contains a unique non-invertible object $X$ of Frobenius-Perron dimension $3+\sqrt{3}$, thus we have
\begin{align*}
X\otimes X=I\oplus g\oplus g^2\oplus 6X.
\end{align*}
That is, $\A$ is a near-group fusion category of type $\Z_3+6$.
\end{proof}

It would be interesting to show whether other near-group fusion categories (generally,  quadratic fusion categories) are coming from quantum groups at roots of unity. For example, by using the same method, we also have the following result.
 \begin{prop}Let $\C$ be a near-group fusion category of type $\Z_4+4$, then there exists a modular tensor equivalence  $\Y(\C)_{\Z_2}^0\cong\C(\Z_2,\eta)\boxtimes\C(\ssl_3,5)_\text{ad}$.
 \end{prop}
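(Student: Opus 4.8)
The plan is to follow the strategy of Theorem~\ref{NearGroup}. Let $X$ be the non-invertible simple object of $\C$; from $X\otimes X=\bigoplus_{g\in\Z_4}g\oplus 4X$ one gets $\FPdim(X)=2+2\sqrt2$ and $\FPdim(\C)=16+8\sqrt2$, and, $\C$ being pseudo-unitary, $\Y(\C)$ is modular with $\FPdim(\Y(\C))=(16+8\sqrt2)^2=384+256\sqrt2$, so $\FPdim(\Y(\C)_{\Z_2}^0)=96+64\sqrt2$. A computation with (\ref{dimformula}) at $k=5$ gives $\FPdim(\C(\ssl_3,5))=144+96\sqrt2$; since the invertible objects $(5,0),(0,5)$ of $\C(\ssl_3,5)$ have ribbon twist $\zeta_3^2\neq 1$, the pointed subcategory $\C(\ssl_3,5)_{\text{pt}}\cong\C(\Z_3,\eta')$ is non-degenerate, whence $\C(\ssl_3,5)\cong\C(\Z_3,\eta')\boxtimes\C(\ssl_3,5)_\text{ad}$ with $\C(\ssl_3,5)_\text{ad}$ modular of Frobenius-Perron dimension $48+32\sqrt2$; thus the right-hand side of the asserted equivalence has Frobenius-Perron dimension $2(48+32\sqrt2)=96+64\sqrt2$, as it must.

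The first substantive step is to place $\C(\ssl_3,5)_\text{ad}$ in the Witt group of non-degenerate braided fusion categories. The conformal embedding $\mathfrak{su}(3)_5\subset\mathfrak{su}(6)_1$ (both of central charge $5$) produces a connected \'{e}tale algebra $A_0\in\C(\ssl_3,5)$ with $\C(\ssl_3,5)_{A_0}^0\cong\C(\ssl_6,1)$, a pointed modular category with fusion group $\Z_6\cong\Z_2\times\Z_3$; comparing ribbon twists identifies its $\Z_3$-factor with $\C(\Z_3,\eta')$ as a metric group, so from $[\C(\ssl_3,5)]=[\C(\ssl_6,1)]$ \cite{DMNO} and additivity of the Witt class under Deligne products one concludes that $[\C(\ssl_3,5)_\text{ad}]$ is the Witt class of a pointed modular category of dimension $2$. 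As the latter has only the trivial connected \'{e}tale algebra, this Witt equivalence yields a connected \'{e}tale algebra $B\in\C(\ssl_3,5)_\text{ad}$ with $\FPdim(B)=4+2\sqrt2$ and $(\C(\ssl_3,5)_\text{ad})_B^0\cong\C(\Z_2,\eta^{-1})$ for a suitable non-degenerate quadratic form $\eta$ on $\Z_2$. Writing $\C_0:=(\C(\ssl_3,5)_\text{ad})_B$ (so $\FPdim(\C_0)=8+4\sqrt2$), \cite[Corollary 3.30]{DMNO} already gives $\Y(\C_0)\cong\C(\Z_2,\eta)\boxtimes\C(\ssl_3,5)_\text{ad}$: the target of the theorem is realized as a Drinfeld center.

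It remains to see that this center is $\Y(\C)_{\Z_2}^0$. Set $\M:=\C(\ssl_3,5)_\text{ad}\boxtimes\Y(\text{Vec}_{\Z_2})$, a modular category of dimension $192+128\sqrt2$ containing the canonical Tannakian subcategory $\Rep(\Z_2)\subset\Y(\text{Vec}_{\Z_2})$, for which $\M_{\Z_2}^0\cong\C(\ssl_3,5)_\text{ad}$. The plan is to construct a connected \'{e}tale algebra $A\in\M$ with $\FPdim(A)=8+4\sqrt2$, $\M_A^0\cong\C(\Z_2,\eta^{-1})$, and $\M_A$ a near-group category of type $\Z_4+4$, obtained by gluing $B$ to a Lagrangian algebra of $\Y(\text{Vec}_{\Z_2})$ along a non-split ``graph'' pattern (the naive split choice $A=B\boxtimes L_0$ produces the \emph{decomposable} category $\C_0\boxtimes\text{Vec}_{\Z_2}$, which is however Morita equivalent to $\C$, so this route is also available). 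Given such an $A$, \cite[Corollary 3.30]{DMNO} yields
\[
\Y(\C)\cong\Y(\M_A)\cong(\M_A^0)^\rev\boxtimes\M\cong\C(\Z_2,\eta)\boxtimes\C(\ssl_3,5)_\text{ad}\boxtimes\Y(\text{Vec}_{\Z_2}),
\]
and condensing the Tannakian $\Rep(\Z_2)\subset\Y(\text{Vec}_{\Z_2})$ (whose local modules form $\text{Vec}$) gives $\Y(\C)_{\Z_2}^0\cong\C(\Z_2,\eta)\boxtimes\C(\ssl_3,5)_\text{ad}$.

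The principal obstacle is the verification that the fusion category obtained really is a near-group category of type $\Z_4+4$, which I would handle exactly as in the proof of Theorem~\ref{NearGroup}: $\FPdim(\M_A)=16+8\sqrt2$; the connected \'{e}tale subalgebras of $\I(I)$, with $\I:\M_A\to\Y(\M_A)$ adjoint to the forgetful functor, correspond to the fusion subcategories of $\M_A$ by \cite[Theorem 4.10]{DMNO}, they have trivial ribbon structure by \cite[Theorem 4.1]{NS}, and inspecting the pointed part of $\C(\Z_2,\eta)\boxtimes\C(\ssl_3,5)_\text{ad}\boxtimes\Y(\text{Vec}_{\Z_2})$ forces $\M_A$ to possess a single proper (pointed) fusion subcategory; decomposing $16+8\sqrt2$ into sums of squares over $\FQ(\sqrt2)$ of Frobenius-Perron dimensions of simple objects and discarding the spurious solution then leaves only $\Q(\M_A)=\{g:g\in\Z_4\}\cup\{X\}$ with $\FPdim(X)=2+2\sqrt2$ and $X\otimes X=\bigoplus_{g\in\Z_4}g\oplus 4X$. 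The delicate point throughout is pinning down the \'{e}tale algebra $A$, equivalently checking that the module category one writes over $\M$ carries no unexpected simple objects; once this is settled, the remaining identifications are bookkeeping.
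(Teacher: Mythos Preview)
Your argument correctly arrives at $\Y(\C_0)\cong\C(\Z_2,\eta)\boxtimes\C(\ssl_3,5)_\text{ad}$ with $\C_0:=(\C(\ssl_3,5)_\text{ad})_B$; this is exactly the category $\B$ in the paper's proof, and the paper reaches the same identification (by directly citing \cite{Ga,Sch1} for the \'{e}tale algebra $A=(0,0)\oplus(2,2)$, rather than your Witt-group/conformal-embedding detour via $\mathfrak{su}(3)_5\subset\mathfrak{su}(6)_1$, though both are fine).

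The divergence, and the gap, is in how you connect $\C_0$ to the near-group category $\C$. You attempt to build $\C$ directly as $\M_A$ for an \'{e}tale algebra $A$ in $\M=\C(\ssl_3,5)_\text{ad}\boxtimes\Y(\text{Vec}_{\Z_2})$, and you acknowledge that producing this $A$ is ``the delicate point'' which you do not settle. Your parenthetical escape route---that $\C_0\boxtimes\text{Vec}_{\Z_2}$ is Morita equivalent to $\C$---is circular: that Morita equivalence is equivalent to $\Y(\C)\cong\Y(\C_0)\boxtimes\Y(\text{Vec}_{\Z_2})$, which is essentially the statement to be proved. So as written the proof is incomplete.

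The paper closes the loop differently and more simply: it computes the fusion rules of $\C_0$ explicitly (rank $4$, simple objects $I,g,X,X^*$ with $X\otimes X^*=I\oplus X\oplus X^*$, $g\otimes X=X^*$, $X\otimes X=g\oplus X\oplus X^*$), and then invokes the known fact \cite{LMP,Izumi} that the $\Z_2$-equivariantization of this category is a near-group fusion category of type $\Z_4+4$. Since equivariantization and de-equivariantization relate the centers by $\Y(\C)_{\Z_2}^0\cong\Y(\C_0)$, the result follows. The missing idea in your approach is precisely this: rather than hunting for a new \'{e}tale algebra in a larger category, one should recognize $\C_0$ itself and use the equivariantization description of $\C$.
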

\begin{proof}Notice that the simple objects of $\C(\ssl_3,5)_\text{ad}$ are \begin{align*}
 \{(0,0),(0,3),(3,0),(1,1),(2,2),(1,4),(4,1)\}.
 \end{align*}
It is well-known that $\C(\ssl_3,5)_\text{ad}$ contains a connected \'{e}tale algebra $A=(0,0)\oplus(2,2)$ such that $\C(\Z_2,\eta)=(\C(\ssl_3,5)_\text{ad})_A^0$ \cite{Ga,Sch1}. Hence,  $\C(\Z_2,\eta)^\text{rev}\boxtimes\C(\ssl_3,5)_\text{ad}\cong\Y(\B)$ for  fusion category $\B:=(\C(\ssl_3,5)_\text{ad})_A$ \cite{DMNO}. Then $\B$ is a fusion category of rank $4$ and fusion rules are
 \begin{align*}
 X\otimes X^*=I\oplus X\oplus X^*, g\otimes X=X^*=X\otimes g, \\
 g\otimes g=I,X\otimes X=X^*\otimes X^*=g\oplus X\oplus X^*.
 \end{align*}
 Indeed, let $X=F(I\boxtimes(1,4))$, where $F:\Y(\B)\to\B$ is the forgetful functor, then $X$ is a simple object of Frobenius-Perron dimension $1+\sqrt{2}$, hence
 \begin{align*}
 X\otimes X&=F(I\boxtimes(1,4)\otimes I\boxtimes(1,4))\\
 &=F(I\boxtimes(3,0)\oplus I\boxtimes(4,1))\\
 &=F(I\boxtimes(3,0))\oplus F(I\boxtimes(4,1)).
 \end{align*}
 Since $\FPdim((3,0))=2+\sqrt{2}$,  it is not simple by counting the Frobenius-Perron dimension of $\B$. Hence, $F(I\boxtimes(3,0))$ must contains an invertible object $g$ as a direct summand. However, $\theta_{(3,0)}=\zeta_4^3$ by Lemma \ref{ribbon1}, $I\boxtimes(3,0)\nsubseteq\I(I)$, where $\I$ is the adjoint functor to $F$, so $g\neq I$. Thus $g\subseteq F(I\boxtimes(3,0))$ and $X^*=F(I\boxtimes(4,1))\subseteq X\otimes X^*$, which implies
 \begin{align*}
 X\otimes X=g\oplus X\oplus X^*=X^*\otimes X^*,
 \end{align*}
 then the rest fusion rules are easy. And it was proved that the $\Z_2$-equivariantization of $\B$ is a near-group fusion category of type $\Z_4+4$ \cite{LMP}\cite{Izumi}.
\end{proof}

\subsection{Faithful extensions of $\A$}\label{extension}

In this subsection, we construct two faithful graded extension of the near-group fusion categories of type $\Z_3+6$ by using Witt equivalences of non-degenerate fusion categories \cite{DMNO}.
\begin{theo}\label{Z2extension}
The near-group fusion category $\A$ admits a faithful $\Z_2$-extension $\B$, which  is a fusion category of rank $8$. Explicitly, $\Q(\B)=\{I,g,g^2,X,Y,Z_1,Z_2,Z_3\}$, and
\begin{align*}
&X\otimes X=I\oplus g\oplus g^2\oplus 6X,g\otimes X=X\otimes g=X,X\otimes Y=Y\otimes X=Z_1\oplus Z_2\oplus Z_3,\\
& g\otimes Z_3=Z_1=Z_3\otimes g^2,g^2\otimes Z_3=Z_2=Z_3\otimes g,Z_i\otimes Z_j=g^{i+2j}\oplus 2X,\\
&Y\otimes Y=I\oplus g\oplus g^2,Y\otimes Z_i=Z_i\otimes Y=X, Z_i\otimes X=X\otimes Z_i=Y\oplus 2Z_1\oplus 2Z_2\oplus 2Z_3.
\end{align*}
\end{theo}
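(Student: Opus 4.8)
The plan is to produce $\B$ by the theory of graded extensions and then bootstrap the whole fusion table from the $\Z_2$-grading and Frobenius reciprocity. By the classification of graded extensions, faithful $\Z_2$-extensions of $\A$ are governed by homomorphisms $\Z_2\to\underline{\mathrm{Pic}}(\A)\cong\underline{\mathrm{Aut}}^{\mathrm{br}}(\Y(\A))$, a given one lifting to an extension precisely when two obstruction classes, in $H^3(\Z_2,\mathrm{Inv}(\Y(\A)))$ and $H^4(\Z_2,\FC^\times)$, vanish. By Theorem~\ref{NearGroup}, $\Y(\A)\cong\C(\Z_3,\eta)\boxtimes\C(\ssl_3,9)_{\Z_3}^0$, so $\mathrm{Inv}(\Y(\A))\cong\Z_3$ (the pointed factor contributes three invertibles, the quantum‑group factor none); hence $H^3(\Z_2,\Z_3)=0$ and $H^4(\Z_2,\FC^\times)=0$, and every order‑two element of $\underline{\mathrm{Aut}}^{\mathrm{br}}(\Y(\A))$ does lift. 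I would take $\phi$ induced by charge conjugation on $\C(\ssl_3,9)$: the Dynkin‑diagram automorphism of $\ssl_3$ descends through the $\Rep(\Z_3)$‑condensation to an order‑two braided autoequivalence $\sigma$ of $\C(\ssl_3,9)_{\Z_3}^0$ (it interchanges $Y_2$ and $Y_3$, so $\sigma\not\cong\mathrm{id}$), hence an autoequivalence $\phi$ of $\Y(\A)$. Let $\B=\A\oplus\B_1$ be the associated faithful $\Z_2$-extension. Then automatically $\FPdim(\B)=2\FPdim(\A)=48+24\sqrt3$, $\B_0=\A$, and $\B$ is non‑trivial (not $\A\boxtimes\text{Vec}_{\Z_2}^{(\omega)}$) since $\phi\not\cong\mathrm{id}$.

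The next, and hardest, step is to determine $\Q(\B_1)$. As $\B_1$ is an invertible $\A$-bimodule category, left and right tensoring by $\mathrm{Inv}(\A)\cong\Z_3$ give $\Z_3$-actions on the finite set $\Q(\B_1)$ preserving Frobenius--Perron dimension. Using the explicit description of the twisted component of the $\Z_2$-crossed extension of $\Y(\A)$ cut out by $\phi$ (equivalently, the non‑triviality of $\phi$), one shows that $\B$ has no invertible objects beyond those of $\A$, that the $\Z_3$-action on $\Q(\B_1)$ has exactly one fixed point $Y$ and one free orbit $\{Z_1,Z_2,Z_3\}$, and that $X\otimes Y$ (with $X$ the unique non‑invertible of $\A$) is supported on the free orbit. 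Granting this, $X\otimes Y$ is a $\Z_3$-invariant object supported on $\{Z_1,Z_2,Z_3\}$ of Frobenius--Perron dimension $\FPdim(X)\FPdim(Y)$, hence $X\otimes Y=Z_1\oplus Z_2\oplus Z_3$ up to relabelling, and the two identities $3\,\FPdim(Z_i)=\FPdim(X)\FPdim(Y)$ and $3\,\FPdim(Z_i)^2+\FPdim(Y)^2=\FPdim(\B_1)=24+12\sqrt3$ force $\FPdim(Y)=\sqrt3$, $\FPdim(Z_i)=2+\sqrt3$. I expect this to be the main obstacle: a bare Frobenius--Perron count over $\FQ(\sqrt3)$ leaves several a priori possibilities for $\Q(\B_1)$ (among them the split extension), so one genuinely needs the specific, non‑inner $\phi$ — this is the point where the quantum‑group realization of $\Y(\A)$ and the non‑degenerate/Witt‑equivalence machinery of \cite{DMNO} enter essentially.

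The remaining entries follow formally. Inside $\B_0=\A$ the fusion is that of the near‑group category, giving $g\otimes X=X\otimes g=X$ and $X\otimes X=I\oplus g\oplus g^2\oplus 6X$ (note $\FPdim(X)=3+2\sqrt3$). The $\Z_2$-grading forces $\B_1\otimes\B_1\subseteq\B_0$ and $\B_0\otimes\B_1\subseteq\B_1$. Since duality permutes $\Q(\B_1)$ preserving dimension and intertwining the left and right $\Z_3$-actions, $Y$ and each $Z_i$ are self‑dual. Then: $Y\otimes Y$ lies in $\B_0$, has dimension $3$ and contains $I$, so $Y\otimes Y=I\oplus g\oplus g^2$; $Y\otimes Z_i$ and $Z_i\otimes Y$ lie in $\B_0$ with dimension $3+2\sqrt3=\FPdim(X)$, so both equal $X$; $Z_i\otimes Z_j$ lies in $\B_0$ with dimension $7+4\sqrt3=1+2\FPdim(X)$, so $Z_i\otimes Z_j=g^{c(i,j)}\oplus 2X$, and $\dim_\FC\Hom_\B(Z_i\otimes Z_j,g^c)=\dim_\FC\Hom_\B(Z_j,Z_i\otimes g^c)$ together with the labelling $g\otimes Z_3=Z_1=Z_3\otimes g^2$, $g^2\otimes Z_3=Z_2=Z_3\otimes g$ of the free orbit pins down $c(i,j)\equiv i+2j\pmod 3$, i.e. $Z_i\otimes Z_j=g^{i+2j}\oplus 2X$; $X\otimes Y=Y\otimes X=Z_1\oplus Z_2\oplus Z_3$ by $\Z_3$-invariance and a dimension count; and $X\otimes Z_i=Z_i\otimes X$ lies in $\B_1$ with dimension $12+7\sqrt3$, where $Y$ occurs with multiplicity $\dim_\FC\Hom_\B(X,Z_i\otimes Y)=\dim_\FC\Hom_\B(X,X)=1$ and each $Z_j$ with multiplicity $\dim_\FC\Hom_\B(X,Z_i\otimes Z_j)=2$, giving $X\otimes Z_i=Y\oplus 2Z_1\oplus 2Z_2\oplus 2Z_3$. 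This exhausts $\Q(\B)=\{I,g,g^2,X,Y,Z_1,Z_2,Z_3\}$, so $\B$ has rank $8$ and is the asserted faithful $\Z_2$-extension of $\A$.
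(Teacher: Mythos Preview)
Your route via the ENO classification of graded extensions is genuinely different from the paper's, and most of the fusion bookkeeping in your last paragraph is fine. But there are two real gaps, both of which you partly flag yourself.

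First, the determination of $\Q(\B_1)$. You write ``Granting this'' for the claim that $\B_1$ has exactly one $\Z_3$-fixed simple $Y$ and one free $\Z_3$-orbit $\{Z_1,Z_2,Z_3\}$ with $X\otimes Y$ supported on the free orbit. Knowing only that $\phi$ is a nontrivial order-two braided autoequivalence of $\Y(\A)$ does not, by itself, pin down the bimodule category $\B_1$ to this extent; there are several FP-dimension decompositions of $24+12\sqrt3$ over $\FQ(\sqrt3)$, and you give no mechanism (twisted-sector counting, crossed-braided structure, or otherwise) that singles out $(\sqrt3,\,2+\sqrt3,\,2+\sqrt3,\,2+\sqrt3)$. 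The paper avoids this entirely: it builds $\B$ so that $\Y(\B)\cong\C(\ssl_3,9)_{\Z_3}^0\boxtimes\C(\ssl_2,4)$ (using that $\C(\ssl_2,4)_{\Z_2}^0\cong\C(\Z_3,\eta^{-1})$ together with \cite[Theorem~1.3]{ENO2}), and then simply pushes the $\sqrt3$-dimensional simple of $\C(\ssl_2,4)$ through the forgetful functor to produce $Y\in\B_1$ directly. Everything about $\Q(\B_1)$ then follows from the Hom-computation $\Hom_\B(X\otimes Y,X\otimes Y)\cong\FC^3$ and a dimension count.

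Second, the noncommutativity $g\otimes Z_3=Z_3\otimes g^2$ is not a labelling convention but a fact to be proved: whether the left and right $\Z_3$-actions on the free orbit agree or are inverse to one another distinguishes a commutative from a noncommutative $\Gr(\B)$ (and also decides whether the $Z_i$ are all self-dual, which you assert without justification). The paper settles this by counting simples with trivial twist in $\Y(\B)=\C(\ssl_3,9)_{\Z_3}^0\boxtimes\C(\ssl_2,4)$: there are only five, so $\I(I)$ cannot be multiplicity-free, hence $\Gr(\B)$ is noncommutative by \cite[Corollary~2.16]{O}. Your argument has no analogue of this step; without an explicit handle on $\Y(\B)$ (or on the crossed-braided extension attached to $\phi$), you have no way to rule out the commutative possibility.

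In short, the paper's Witt-equivalence construction of $\Y(\B)$ is doing real work that your abstract extension-theory setup does not replace: it supplies both the simple object $Y$ of dimension $\sqrt3$ and the twist data needed for noncommutativity.
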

\begin{proof}
Consider the modular fusion category $\C(\ssl_3,9)_{\Z_3}^0\boxtimes\C(\ssl_2,4)$. Since $\C(\ssl_2,4)$ contains a Tannakian fusion subcategory $\Rep(\Z_2)$ such that $\C(\ssl_2,4)_{\Z_2}^0\cong\C(\Z_3,\eta^{-1})$, we have the following braided tensor equivalences
\begin{align*}
\left(\C(\ssl_3,9)_{\Z_3}^0\boxtimes\C(\ssl_2,4)\right)_{\Z_2}^0\cong
\C(\ssl_3,9)_{\Z_3}^0\boxtimes\C(\ssl_2,4)_{\Z_2}^0\cong
\Y(\A),
\end{align*}
by Theorem \ref{NearGroup}. Then it follows from \cite[Theorem 1.3]{ENO2}  that $\C(\ssl_3,9)_{\Z_3}^0\boxtimes\C(\ssl_2,4)\cong\Y(\B)$ for a fusion category $\B$ with $\B$ being a faithful $\Z_2$-extension of $\A$. Let $\B=\oplus_{h\in\Z_2}\B_g$ be the corresponding grading of $\B$ with $\B_e=\A$.

Let $F:\Y(\B)\to\B$ be the forgetful functor. Notice that the above tensor equivalence implies that $\Y(\B)$ contains a simple object of Frobenius-Perron dimension $\sqrt{3}$, the image of this simple object in $\B$, denoted by $Y$, must be simple. Obviously,  $Y\in\B_h$ and $Y\otimes Y=I\oplus g\oplus g^2$. Since
\begin{align*}
\Hom_\B(X\otimes Y,X\otimes Y)\cong\Hom_\B(X,X\otimes Y\otimes Y)=\FC^3,
\end{align*}
$X\otimes Y=Z_1\oplus Z_2\oplus Z_3$, where $Z_j$ are distinct simple objects. It follows from
\begin{align*}
\Hom_\B(Y\otimes Y, X)=\Hom(g^i\otimes Y,Z_j)\cong\Hom(g^i,Z_j\otimes Y)=0
\end{align*}
that $Z_j\otimes Y=n_jX$  for some integers $n_j\geq1$, $1\leq j\leq 3$.  Therefore,
\begin{align*}
3X=X\otimes (Y\otimes Y)=(X\otimes Y)\otimes Y=(n_1+n_2+n_3)X,
\end{align*}
thus $Z_j\otimes Y=X$, similarly, $Y\otimes Z_j=X$. By comparing the Frobenius-Perron dimensions, we obtain $\Q(\B_h)=\{Y,Z_1,Z_2,Z_3\}$.

Notice that at least one of $Z_j$ are self-dual. Without loss of generality, assume $Z_3\otimes Z_3=I\oplus 2X$,  $g\otimes Z_3=Z_1=Z_3\otimes g^k$ and $g\otimes Z_1=Z_2$ for some $k$. Then $X\otimes Z_i=X\otimes Z_3=Y\oplus 2Z_1\oplus 2Z_2\oplus 2Z_3$ and
$Z_i\otimes Z_j=g^{i+kj}\otimes 2X$. Meanwhile, ribbon structure of simple objects of $\C(\ssl_2,4)$ are explicitly $1,1,\zeta_3,\zeta_8,\zeta_8^5$, so $\C(\ssl_2,4)\boxtimes\C(\ssl_3,9)_{\Z_3}^0$ has exactly five simple objects of trivial twists by Lemma \ref{ribbon1}, then it follows from \cite[Corollary 2.16]{O} that $\I(I)$ can't be multiplicity-free. Hence, $\Gr(\B)$ is noncommutative, so $g\otimes Z_3=Z_3\otimes g^2$.
\end{proof}

Notice that we also have modular equivalence
\begin{align*}
(\C(\Z_3,\eta^{-1})\boxtimes\C(\ssl_3,9))_{\Z_3}^0\cong\C(\Z_3,\eta^{-1})
\boxtimes\C(\ssl_3,9))_{\Z_3}^0
\cong\Y(\A)
\end{align*}
hence, there  $\C(\Z_3,\eta)\boxtimes\C(\ssl_3,9)\cong\Y(\D)$ as modular fusion category by \cite[Theorem 1.3]{ENO2}, where $\D$ is a faithful $\Z_3$-extension of $\A$.

\begin{lemm}\label{dimension}
Fusion category $\D$ contains a simple object of Frobenius-Perron dimension $1+\sqrt{3}$.
\end{lemm}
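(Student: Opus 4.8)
The plan is to exhibit the required object as the forgetful-functor image of a carefully chosen simple object of the center, using the identification $\Y(\D)\cong\C(\Z_3,\eta)\boxtimes\C(\ssl_3,9)$ just established. First I would locate a simple object of dimension $1+\sqrt3$ inside $\C(\ssl_3,9)$: putting $k=9$, $m_1=1$, $m_2=0$ in (\ref{dimformula}) gives
\[
\dim((1,0))=\frac{\sin(\tfrac{2\pi}{12})\sin(\tfrac{\pi}{12})\sin(\tfrac{3\pi}{12})}{\sin(\tfrac{2\pi}{12})\sin^{2}(\tfrac{\pi}{12})}=\frac{\sin(\tfrac{\pi}{4})}{\sin(\tfrac{\pi}{12})}=1+\sqrt{3},
\]
and symmetrically $\dim((0,1))=1+\sqrt3$. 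Thus $I\boxtimes(1,0)$ is a simple object of $\Y(\D)$ of Frobenius--Perron dimension $1+\sqrt3$; since the forgetful functor $F\colon\Y(\D)\to\D$ is a tensor functor it preserves Frobenius--Perron dimension, so $W:=F(I\boxtimes(1,0))$ is an object of $\D$ with $\FPdim(W)=1+\sqrt3$, and the lemma will follow once we prove that $W$ is simple.

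Next I would bound the shape of any decomposition of $W$ arithmetically. The Frobenius--Perron dimensions of the simple objects of $\D$ lie in $\FQ(\sqrt3)$, hence are algebraic integers in $\Z[\sqrt3]$; writing such a dimension as $a+b\sqrt3$ with $a,b\in\Z$, it must be at least the absolute value of its Galois conjugate $a-b\sqrt3$, forcing $a\ge0$ and $b\ge0$, and the only such numbers in the interval $[1,\sqrt3]$ are $1$ and $\sqrt3$. Since $1<1+\sqrt3<3$, a non-simple $W$ would split into exactly two simple constituents, of dimensions in $[1,\sqrt3]$ summing to $1+\sqrt3$; as neither $1+1$ nor $\sqrt3+\sqrt3$ equals $1+\sqrt3$, the only remaining possibility is $W\cong A\oplus B$ with $A$ invertible and $B$ simple of Frobenius--Perron dimension $\sqrt3$.

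Finally I would eliminate this possibility with the grading. The center of a faithfully $\Z_3$-graded fusion category is again faithfully $\Z_3$-graded, and since $\Z_3$ is abelian the forgetful functor $F$ is compatible with the two gradings; in particular $W=F(I\boxtimes(1,0))$ is homogeneous. Its degree is non-trivial, for otherwise $W$ would lie in $\D_0=\A$, whereas by Theorem \ref{NearGroup} the simple objects of $\A$ have Frobenius--Perron dimension $1$ or $3+2\sqrt3$ and no non-negative integer combination of these equals $1+\sqrt3$ (the coefficient of $\sqrt3$ in any such combination is even). Hence $W$, and therefore both $A$ and $B$, lies in a single non-trivial component $\D_{\bar g}$; then $A^{-1}\otimes B$ is a simple object of $\D_{-\bar g}\cdot\D_{\bar g}=\D_0=\A$ of Frobenius--Perron dimension $\sqrt3$, which is impossible by the same computation. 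Therefore $W$ is simple of Frobenius--Perron dimension $1+\sqrt3$, which is the assertion of the lemma.

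The step I expect to demand the most care is the last one: one has to know that the $\Z_3$-grading of $\D$ lifts to a $\Z_3$-grading of $\Y(\D)$ for which the forgetful functor is graded, and to check that $I\boxtimes(1,0)$ does not land in the neutral component. If one prefers to bypass the theory of centers of graded fusion categories, the alternative is to exclude directly any simple object $Y\in\D$ with $\FPdim(Y)=\sqrt3$: such a $Y$ cannot lie in $\A=\D_0$, so $Y\otimes Y^{*}$ must be $I$ together with two invertible objects of $\A$, and one then derives a contradiction by analysing the fusion subcategory generated by $Y$ and the invertibles of $\A$ against $\FPdim(\D)=72+36\sqrt3$ --- an elementary but noticeably longer route.
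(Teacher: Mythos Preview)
Your overall strategy is correct and the argument can be made to work, but one step is not justified as written. In the second paragraph you assert that the Frobenius--Perron dimensions of all simple objects of $\D$ lie in $\FQ(\sqrt3)$ and hence in $\Z[\sqrt3]$. This does not follow from anything established so far: knowing only that a simple $X$ lies in some component $\D_g$ gives $\FPdim(X)^2=\FPdim(X\otimes X^*)\in\Z+\Z(3+2\sqrt3)$, which would still permit, say, $\FPdim(X)=\sqrt2$. The paper avoids this by quoting the standard fact (see \cite{EGNO}) that any Frobenius--Perron dimension strictly below $2$ equals $2\cos(\pi/n)$ for some integer $n\ge3$; one then checks by inspection that the only pair of such numbers summing to $1+\sqrt3$ is $(1,\sqrt3)$. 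Your argument is easily repaired the same way, or alternatively by reordering: establish homogeneity of $W$ first, and then for a summand $A$ of $W$ in $\D_{\bar g}$ the object $A\otimes A^*$ lies in $\A$ with $\FPdim(A)^2<4$, forcing $\FPdim(A)^2\in\{1,2,3\}$, whence only $1+\sqrt3$ works.

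Where you genuinely diverge from the paper is the endgame. You use that for an abelian grading group the forgetful functor $F\colon\Y(\D)\to\D$ is graded (so $W$ is homogeneous in some $\D_{\bar g}$ with $\bar g\ne0$), and then tensoring the inverse of the invertible summand with the $\sqrt3$-dimensional one produces a simple of $\A$ of dimension $\sqrt3$, an immediate contradiction. The paper instead never invokes homogeneity of $W$: from the existence of an invertible and a $\sqrt3$-dimensional simple somewhere in $\D\setminus\A$ it builds the strictly weakly integral fusion subcategory $\B\subset\D$ and derives a numerical contradiction from the divisibility $\FPdim(\B_{\mathrm{pt}})\mid\FPdim(\B)\mid\FPdim(\D)=36(2+\sqrt3)$ of \cite{ENO1}. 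Your route is shorter and more transparent once one accepts the grading-compatibility of $F$ (which does hold: for $G$ abelian the half-braiding on any simple object of the center decomposes along the $G$-grading of $\D$); the paper's route trades that structural input for a fiddlier dimension count, and is essentially the ``elementary but noticeably longer route'' you yourself sketch at the end.
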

\begin{proof}
 By considering the Frobenius-Perron dimensions of simple objects of $\C(\ssl_3,9)$, we know it contains a simple object $W$ (the simple object $(1,0)$, for example) with non-trivial twist of Frobenius-Perron dimension $1+\sqrt{3}$. Let $F:\Y(\D)\to\D$ be the forgetful functor, we claim that $F(I\boxtimes W)$ must be simple
 Indeed, if   $F(I\boxtimes W)$ is not simple, then $F(I\boxtimes W)=W_1\oplus W_2$, where $W_j$ are simple. Then $\FPdim(W_j)=2\cos\frac{\pi}{n_j}$ for some integer $n_j\geq3$ \cite{EGNO}, which means $n_1=3$ and $n_2=6$. Therefore, each grading component of $\C$ contains at least three invertible simple objects up to isomorphism and the non-trivial components contains simple objects of Frobenius-Perron dimension $\sqrt{3}$. Let $\B$ be the strictly weakly integral fusion subcategory of $\D$, so $\FPdim(\B_\text{pt})=3+6a$ and $\FPdim(\D)=3+6a+6b$. Notice that
 \begin{align*}
 \FPdim(\B_\text{pt})\mid \FPdim(\B)\mid\FPdim(\D)=36(2+\sqrt{3})
  \end{align*}
  by \cite[Proposition 8.15]{ENO1}, which then means $a=1$ and $b=0$, it is impossible. Hence object $F(I\boxtimes W)$ must be simple.
\end{proof}

\begin{theo}\label{Z3extension}
The faithful $\Z_3$-extension $\D$ is a fusion category of rank $12$, which has commutative fusion rules. Explicitly, $\Q(\D)=\{I,g,g^2,X,V,W_1,W_2,W_3,V^*,W_1^*,W_2^*,W_3^*\}$, and
\begin{align*}
&X\otimes X=I\oplus g\oplus g^2\oplus 6X,g\otimes X=X,g\otimes W_1=W_2,g\otimes W_2=W_3,V^*\otimes W_j=2X,\\
&W_i\otimes W_j=g^{i+j-2}\otimes W_1^*\oplus V^*,V\otimes V^*=I\oplus g\oplus g^2\oplus 3X,W_i\otimes W_j^*=g^{i-j}\oplus X,\\
&V\otimes W_i=W_1^*\oplus W_2^*\oplus W_3^*\oplus V^*,V\otimes V=W_1^*\oplus W_2^*\oplus W_3^*\oplus 3V^*,\\
&W_i\otimes X=W_1\oplus W_2\oplus W_3\oplus 2V, X\otimes V=V\otimes X=2W_1\oplus 2W_2\oplus 2W_3\oplus 3V.
\end{align*}
\end{theo}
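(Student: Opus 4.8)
\emph{Proof proposal.}
The plan is to exploit the faithful grading $\D=\D_0\oplus\D_1\oplus\D_2$ with $\D_0=\A$, the modular equivalence $\Y(\D)\cong\C(\Z_3,\eta)\boxtimes\C(\ssl_3,9)$ established above, and the forgetful functor $F\colon\Y(\D)\to\D$; recall $\Q(\D_0)=\{I,g,g^2,X\}$ with $\FPdim(X)=3+2\sqrt3$ and $\FPdim(\D_i)=\FPdim(\A)=24+12\sqrt3$ for each $i$.

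\emph{Step 1: simple objects.} First I would pin down $\Q(\D)$. By Lemma \ref{dimension} one of the nontrivial components, say $\D_1$, contains a simple object $W_1$ with $\FPdim(W_1)=1+\sqrt3$; since $\FPdim(W_1\otimes W_1^*)=4+2\sqrt3$ can be written over $\Q(\A)$ only as $1+(3+2\sqrt3)$, we get $W_1\otimes W_1^*=I\oplus X$, so $g,g^2$ do not fix $W_1$ and $W_1,W_2:=g\otimes W_1,W_3:=g^2\otimes W_1$ are three distinct simple objects of dimension $1+\sqrt3$, using up $12+6\sqrt3$ of $\FPdim(\D_1)$. The remaining $12+6\sqrt3$ cannot consist only of dimensions $1+\sqrt3$ (the object $X\otimes W_1\in\D_1$ has dimension $9+5\sqrt3$, not a nonnegative integer multiple of $1+\sqrt3$), and a short computation in $\Z[\sqrt3]$ using the $\langle g\rangle$--action on $\Q(\D_1)$ leaves exactly one further simple object $V$, of dimension $3+\sqrt3$, necessarily $g$--fixed. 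Dualising, $\Q(\D)=\{I,g,g^2,X,V,W_1,W_2,W_3,V^*,W_1^*,W_2^*,W_3^*\}$ and $\rank(\D)=12$.

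\emph{Step 2: every simple is a pushforward; commutativity; the form $\eta$.} The Tannakian $\Rep(\Z_3)$ detecting the grading sits in $\Y(\D)$ as $\{(0,0),(9,0),(0,9)\}\subseteq\C(\ssl_3,9)$, so $F(I\boxtimes(9,0))=F(I\boxtimes(0,9))=I$, while a generator $c$ of the pointed factor gives $F(c^{\pm1}\boxtimes(0,0))=g^{\pm1}$. Using \eqref{dimformula}, $\FPdim((1,1))=3+2\sqrt3$, $\FPdim((1,0))=1+\sqrt3$, $\FPdim((0,2))=3+\sqrt3$, and by the grading and dimension constraints of Step 1 the images $F(I\boxtimes(1,1))$, $F(I\boxtimes(1,0))$ (Lemma \ref{dimension}), $F(I\boxtimes(0,2))$ are simple, say $X$, $W_1$, $V$; tensoring with $c^{\pm1}\boxtimes(0,0)$ then identifies $W_2,W_3$ and the starred objects. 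Thus every simple of $\D$ is $F$ of a simple object of the braided (indeed modular) category $\Y(\D)$, whence $[F(a)][F(b)]=F_*([a][b])=F_*([b][a])=[F(b)][F(a)]$ (with $F_*$ the induced map on Grothendieck rings) and $\Gr(\D)$ is commutative. By \cite[Corollary 2.16]{O} this makes $\I(I)$ multiplicity-free, so $\rank(\D)=12$ is at most the number of simple objects of $\Y(\D)$ with trivial ribbon twist; computing that number from Lemma \ref{ribbon1} for each of the two nondegenerate quadratic forms on $\Z_3$ gives $6$ or $12$, so $\eta$ is forced to be the one with $12$, and then $\I(I)$ is precisely the sum of those twelve trivial-twist simple objects (their Frobenius--Perron dimensions add up to $\FPdim(\D)$).

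\emph{Step 3: fusion rules.} Now every product is read off by pushing the $\ssl_3$ tensor-product decompositions through $F$ (these are classical, since every weight occurring has $m_1+m_2\le 4$): a summand $(m_1,m_2)$ contributes $F(c^{k}\boxtimes(m_1,m_2))$ for the appropriate $k$, and the whole simple-current $\Z_3$--orbit of $(m_1,m_2)$ is identified; for the summands whose image is not simple (those of dimension $4+2\sqrt3$ or $5+3\sqrt3$) the decomposition into the twelve simples is forced by Frobenius--Perron dimension, the $\Z_3$--grading and Frobenius reciprocity; and which invertible $g^k$ occurs in each product landing in $\D_0$ follows either directly from $F(c^m\boxtimes(0,0))=g^m$ and $g\otimes X=X$ (e.g.\ $W_i\otimes W_j^*$ comes from $(1,0)\otimes(0,1)=(1,1)\oplus(0,0)$, giving $g^{i-j}\oplus X$) or from $\dim\Hom_\D(F(a),g^k)=\dim\Hom_{\Y(\D)}(c^{-k}\boxtimes a,\I(I))$ together with the explicit form of $\I(I)$ just obtained and the fact that its constituents have trivial twist \cite[Theorem 4.1]{NS} (this is what pins down $F(I\boxtimes(2,2))=g\oplus g^2\oplus 2X$, hence $X\otimes X$ and $V\otimes V^*$). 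Running through $(1,0)\otimes(1,0)$, $(1,0)\otimes(1,1)$, $(1,0)\otimes(0,2)$, $(2,0)\otimes(2,0)$, $(0,2)\otimes(2,0)$, $(1,1)\otimes(1,1)$ and the remaining products reproduces the stated table. I expect the real obstacle to be making the Step 2 identifications completely precise --- essentially reconstructing $F$, i.e.\ locating $\I(I)$ inside $\C(\Z_3,\eta)\boxtimes\C(\ssl_3,9)$ --- since any ambiguity there propagates as a wrong $\langle g\rangle$--shift throughout the fusion table; once that is settled, the remainder is bookkeeping with $\ssl_3$ fusion rules.
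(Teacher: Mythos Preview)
Your approach is correct and genuinely different from the paper's. The paper works almost entirely inside $\D$: after Lemma~\ref{dimension} it computes $\dim_\FC\Hom(X\otimes W_i,X\otimes W_i)=7$, shows by a Frobenius--Perron inequality that the complement of $W_1\oplus W_2\oplus W_3$ in $X\otimes W_i$ cannot be multiplicity-free and hence equals $2V$ for a single new simple $V$, and then derives the remaining products one at a time by further Hom-dimension counts; commutativity is proved only near the end, from the single fact that $g$ lifts to the center via the pointed factor, and the forgetful functor is invoked just once (on $(0,1)\otimes(0,1)$) to pin down $W_1^*\subset W_1\otimes W_1$. You instead realise \emph{every} simple of $\D$ as the image under $F$ of an explicit simple in $\C(\Z_3,\eta)\boxtimes\C(\ssl_3,9)$, which gives commutativity immediately (any $F(a)$ carries a half-braiding) and reduces the entire fusion table to classical $\ssl_3$ Clebsch--Gordan decompositions, with the explicit description of $\I(I)$ (forced by multiplicity-freeness plus the twelve trivial-twist simples) resolving the residual $g$-ambiguities. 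Your route is more uniform and makes the quantum-group origin of $\D$ completely transparent; the paper's route is more self-contained and needs essentially no $\ssl_3$ fusion data. Two minor remarks: your citation of Lemma~\ref{ribbon1} should really point to the twist formula $\theta_{(m_1,m_2)}$ used in its proof, since the lemma as stated is about $\C(\ssl_3,9)_{\Z_3}^0$ rather than $\C(\ssl_3,9)$; and the paper's $X\otimes W_i$ argument for producing $V$ is slightly cleaner than your Step~1 dimension count, which as written still needs a line excluding further $g$-orbits in $\D_1$.
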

\begin{proof}Let $\D=\oplus_{h\in\Z_3}\D_h$ with $\D_e=\A$. To simplify notations, we use $\C(\Z_3,\eta^{-1})\boxtimes\C(\ssl_3,9)$ to denote the Drinfeld center $\Y(\D)$. By Lemma \ref{dimension}, both of the non-trivial components  $\D_h$ and $\D_{h^2}$ contain at least three  simple objects $W_i$ of Frobenius-Perron dimension $1+\sqrt{3} $, and  obviously $W_i\otimes W_i^*=I\oplus X$. In fact, we can regard $F(I\boxtimes(0,1))$ as one of the simple objects $W_j$. Assume $g\otimes W_1=W_2$ and $g\otimes W_2=W_3$. Hence  $W_i\otimes X=X\otimes W_i=W_1\oplus W_2\oplus W_3\oplus 2V$ with $\FPdim(V)=3+\sqrt{3}$. Indeed, since
\begin{align*}
\dim_\FC\Hom(X\otimes W_i,X\otimes W_i)=\dim_\FC\Hom(X,X\otimes W_i\otimes W_i)=7,
\end{align*}
$W_i\otimes X=W_1\oplus W_2\oplus W_3\oplus M$ with $M$ not being simple. If $M$ is multiplicity-free, then $M=M_1\oplus M_2\oplus M_3\oplus M_4$, where $M_j$ ($1\leq j\leq4$) are simple. Let $\FPdim(M_4)$ being the smallest among $\FPdim(W_j)$, so $\FPdim(M_4)\leq\frac{3+\sqrt{3}}{2}$. Meanwhile, $N_{X,W_i}^{M_4}=N_{W_i,M_4^*}^X=1$,  thus $\FPdim(W_i)\FPdim(M_4)\geq\FPdim(X)$, which means $\FPdim(M_4)\geq\frac{3+\sqrt{3}}{2}$. However, $\frac{3+\sqrt{3}}{2}$ is not an algebraic integer, it is a contradiction. So $M=2V$ with $V$ being simple. By comparing the Frobenius-Perron dimensions of simple objects, we have $\Q(\D_h)=\{W_1,W_2,W_3,V\}$  and $\Q(\D_{h^2})=\{W_1^*,W_2^*,W_3^*,V^*\}$, so   $\text{rank}(\D)=12$.

 Obviously, $V\otimes V^*=I\oplus g\oplus g^2\oplus 3X$.
  Since
\begin{align*}
\dim_\FC\Hom(V\otimes W_i,V\otimes W_i)=\dim_\FC\Hom(V,V\otimes (I\oplus X))=4,
\end{align*}
we also have $V\otimes V=W_1^*\oplus W_2^*\oplus W_3^*\oplus 3V^*$ and $V\otimes W_i=W_i\otimes V=W_1^*\oplus W_2^*\oplus W_3^*\oplus V^*$. Therefore,
$X\otimes V=V\otimes X=2W_1\oplus 2W_2\oplus 2W_3\oplus 3V$, $V^*\otimes W_j=W_j^*\otimes V=2X$.

Notice that $\text{Vec}_{\Z_3}$ is exactly the  image of $\C(\Z_3,\eta^{-1})$ under the forgetful functor $F$, hence $g\otimes W_i=W_i\otimes g$ for all $1\leq i\leq3$. Consequently, we know $\D$ has commutative fusion rules.  Assume $W_1\otimes W_1=V^*\oplus W_l^*$ for some simple object $W_l$, then
\begin{align*}
W_i\otimes W_j=W_1\otimes W_1\otimes g^{i+j-2}=V^*\oplus W_l^*\otimes g^{i+j-2}
 \end{align*}
Thus, we only need to determine $W_1\otimes W_1$. Meanwhile,
\begin{align*}
F(I\boxtimes(0,1))\otimes F(I\boxtimes(0,1))=F(I\boxtimes (0,1)\otimes (0,1))=F(I\boxtimes(1,0)\oplus I\boxtimes(0,2)),
\end{align*}
which implies $W_j^*\subseteq W_j\otimes W_j$ for all $1\leq j\leq 3$, as $I\boxtimes(1,0)$ is the dual object of $I\boxtimes(0,1)$. This finishes the proof of the theorem.
\end{proof}

  A direct computations shows that the following simple objects
\begin{align*}
&I\boxtimes (0,0),I\boxtimes(0,9), I\boxtimes(0,9),I\boxtimes (4,1),I\boxtimes (4,4),I\boxtimes (1,4),\\
&g\boxtimes(2,2), g\boxtimes(2,5), g\boxtimes(5,2),
g^2\boxtimes(2,2), g^2\boxtimes(2,5), g^2\boxtimes(5,2)
\end{align*}
are the simple objects of $\C(\Z_3,\eta^{-1})\boxtimes\C(\ssl_3,9)$ with trivial ribbon.
Since $\text{Vec}_{\Z_3}\subseteq\A\subseteq\D$, we know $\I(I)$ contains two non-trivial connected \'{e}tale subalgebras $A_1\subseteq A_2$ such that $\Y(\D)_{A_1}^0\cong\Y(\A)$ and $\Y(\D)_{A_2}^0\cong\Y(\text{Vec}_{\Z_3})$ \cite{DMNO}, where $A_1:=\text{Fun}(\Z_3)=I\boxtimes (0,0)\oplus I\boxtimes(0,9)\oplus I\boxtimes(0,9)$. Notice that $A_2$ is an \'{e}tale algebra over $A_1$ and $\FPdim(A_2)=24+12\sqrt{3}$, we must have \begin{align*}
A_2=A_1\oplus I\boxtimes(4,1)\oplus I\boxtimes (4,4)\oplus I\boxtimes (1,4).
 \end{align*}
And $\text{Gr}(\D)$ is commutative, so $\I(I)$ is multiplicity-free by
\cite[Corollary 2.16]{O}. Then
\begin{align*}
\I(I)=A_2\oplus \oplus_{j=1}^2 (g^j\boxtimes(2,2)\oplus g^j\boxtimes(2,5)\oplus g^j\boxtimes(5,2)).
\end{align*}

\begin{rema}In addition, $\C(\ssl_2,4)\boxtimes\C(\ssl_3,9)\cong\Y(\C)$, where $\C$ is a $\Z_6$-extension of $\A$, $\C$ is also a $\Z_2$-extension of $\D$ and $\Z_3$-extension of $\B$. Notice that
\begin{align*}
\Hom(W_i\otimes Y,W_i\otimes Y)=\Hom(W_i, W_i\otimes Y\otimes Y)=\FC,
\end{align*}
so $U:=W_i\otimes Y=W_1\otimes Y$ must be simple, and $U\otimes U^*=I\oplus g\oplus g^2\oplus3X$. Notice that
\begin{align*}
\Hom_\C(V\otimes Y, V\otimes Y)=\Hom_\C(V,V\otimes Y\otimes Y)=\FC^3,
\end{align*}
Hence $V\otimes Y=T_1\oplus T_2\oplus T_3$, where $T_j$ are simple. Assume $\FPdim(T_3)\leq\FPdim(T_j)$ for all $j$, then $\FPdim(T_3)\leq1+\sqrt{3}$, meanwhile, $N_{V,Y}^{T_3}=N_{Y,T_3^*}^{V^*}=1$, so
\begin{align*}
\FPdim(Y)\FPdim(T_3)\geq\FPdim(V)=3+\sqrt{3}.
 \end{align*}
Therefore, $\FPdim(T_j)=1+\sqrt{3}$ for all $1\leq j\leq 3$.

Consequently,
$\C=\oplus_{h\in\Z_6}\C_h$ with $\rank(\C)=24$, and
\begin{align*}
\Q(\C_e)&=\{I,g,g^2,X\},\Q(\C_h)=\{U^*,T_1^*,T_2^*,T_3^*\},
\Q(\C_{h^2})=\{W_1,W_2,W_3,V\},\\
\Q(\C_{h^3})&=\{Z_1,Z_2,Z_3,Y\},\Q(\C_{h^4})=\{W_1^*,W_2^*,W_3^*,V^*\},
\Q(\C_{h^5})=\{U,T_1,T_2,T_3\}.
\end{align*}
The fusion rules can be obtained  by in a similar method, we omit it here.
\end{rema}
\section*{Acknowledgements}
The author thanks D. Penneys for clarification on the existence of near-group fusion category of type $\Z_3+6$ and for providing reference \cite{BEK}. The author thanks C. Edie-Michell, S-H. Ng and Y. Wang for comments and communications. The author  is  supported by the National Natural Science Foundation of China (no.12101541), the Natural Science Foundation of Jiangsu Province (no.BK20210785), and the Natural Science Foundation of Jiangsu Higher Institutions of China (no.21KJB110006).

\bigskip\author{Zhiqiang Yu\\ \thanks{Email:\,zhiqyumath@yzu.edu.cn}\\{\small School  of Mathematical Science,  Yangzhou University, Yangzhou 225002, China}}

\end{document}